\documentclass[a4paper,10pt,reqno]{amsart}

\usepackage{amsmath}
\usepackage{amssymb}
\usepackage{amsthm}
\usepackage{amsaddr}
\usepackage{mathrsfs}
\usepackage{enumerate}

\numberwithin{equation}{section}  

\newtheorem{theorem}{Theorem}[section]
\newtheorem{lemma}[theorem]{Lemma}
\newtheorem{proposition}[theorem]{Proposition}
\newtheorem{remark}[theorem]{Remark}

\newcommand{\proba}{{\mathbb{P}}} 
\newcommand{\zentiers}{\mathbb{Z}} \newcommand{\entiers}{\mathbb{N}}

\newcommand{\alphabet}{\mathcal{A}} \newcommand{\tq}{\;\big|\;}
 
 \newcommand{\paire}{\mathfrak{Y}}

 \newcommand{\somb}{\Gamma}

\begin{document}
\title[Blurred chains.]{Blurred stochastic chains.}

\author{Pierre Collet }
\address{
  Centre de Physique Th\'eorique, CNRS UMR 7644, Ecole Polytechnique,
  91128 Palaiseau Cedex, France} 
 \email{collet@cpht.polytechnique.fr}

\author{Antonio Galves} 
\address{
  Instituto de Matem\'atica e Estat\'{\i}stica, Universidade de S\~ao
  Paulo, BP 66281, 05315-970 S\~ao Paulo, Brasil} 
  \email{galves@ime.usp.br}

\date{November 1, 2016}

  \thanks{This work is part of FAPESP project
    {\em Research, Innovation and Dissemination Center for
      Neuromathematics} (grant 2013/07699-0) and USP project {\em
      Mathematics, computation, language and the brain}.  AG is
    partially supported by CNPq fellowship (grant 309501/2011-3.)}

\keywords{stochastic chains, coupling, random perturbations}  
\subjclass[2000]{60G99, 60K99}

\begin{abstract}
  Assume we have two stochastic chains taking values in a finite
  alphabet.  These chains may be of infinite order. Assume also that
  these chains are coupled in such a way that given the past
  of both chains
  they have a not too large probability of differing.  This is the case when
  we observe a chain through a noisy channel. This situation
  presumably also occurs in models for the brain activity when a chain
  of stimuli is presented to a volunteer and we observe a
  corresponding chain of neurophysiological recordings.

  The question is how these two chains are quantitatively related.
  Under suitable conditions, we obtain upper-bounds for the
  differences between the marginal conditional distributions of the
  two chains and between the probability of the next symbol of each
  chain, given the past of the past of one of them. 
\end{abstract}

\maketitle

\section{Introduction.}

Assume $(X_n)_{n\in\zentiers}$ and $(Y_n)_{n\in\zentiers}$ are
stochastic chains coupled in such a way that given the past they have
a small probability of differing.  The simplest situation is when
$(X_n)$ is an autonomous chain, possibly of infinite order and each
step $n$ the symbol $Y_n$ is obtained by changing with small
probability the symbol $X_n$ (Collet {\sl et al.} 2008 and Garcia and
Moreira 2015)\nocite{cgl} \nocite{garcia}. In this case, if $(X_n)$ is
not of infinite order but only a Markov chain, the pair $(X_n, Y_n)$
is an example of Hidden Markov Model (we refer the reader to the
classical references Baum and Petri 1966 and Rabiner 1989; see also
Verbitsky 2015 for a recent survey on the more general class of Hidden
Gibbs Models). However, besides the fact that articles on Hidden
Markov Models only consider Markov chains, the classical literature on
these models, as far as we know, do not consider the type of results
proved here.

A more involved situation appears in neurobiology when
electrophysiological or behavioral data are recorded while a volunteer
is exposed to a sequence of stimuli generated by a stochastic
chain. Experimental evidence support the idea that the value
associated to the recordings at each step is a
marker indicating how well the brain of the volunteer the predicts the next step of the
stimulus, given the past. In this situation the chains are coupled in
a more complicated way than just independent random
perturbations. More precisely, in this case the law at each step of
the recorded value may depend on the past of both chains (Duarte {\sl et
  al.} 2016).

A more complicated situation occurs when the next step of each chain
depends on the past of both chains. This situation occurs when we model
the joint behavior of two opponents trying to guess each other next response,
given their knowledge of the past. In this case each chain can be
seen as {\sl blurred} version of the other.

In what follows we present a mathematical framework covering this more
general case. In this framework we will make 
  assumptions on one of the chain (for definiteness the chain $ (X_n)$), and
derive some consequences for the other chain (for definiteness the 
chain $(Y_n)$). For example , we obtain
upper-bounds for the differences between the marginal conditional
distributions of 
$(X_n)$ and $(Y_n)$. This is the content of our Theorem
\ref{blurred}.  We also ask how well can we predict the next symbol of
the chain $(X_n)$, given that we know the symbols of the chain $(Y_n)$
up to the present time. This is the content of our Theorem \ref{predire}.

This article is organized as follows. The notation, basic definitions
and the main results (Theorems \ref{blurred} and \ref{predire}) are
stated in Section \ref{notation}.  The basic properties of the
marginal chains are presented in Section \ref{marginal}. These results
will be used in the proofs of the main results and are interesting by
themselves. The lemmas required in the proofs of  Theorems
\ref{blurred} and \ref{predire} are presented in Section
\ref{lemmas}. Finally the proofs of Propositions \ref{leX} and
\ref{leY} and Theorems \ref{blurred} and \ref{predire} are presented
in Section \ref{proofs}.

\section{Notation and main results.}\label{notation}

Let $\alphabet$ denote a finite alphabet.
 Given two integers $m\leq n$ we denote by $a_m^n$ the sequence
$a_m, \ldots, a_n$ of symbols in $\alphabet$.
The length of the sequence $a_m^n$
 is denoted by $\ell(a_m^n)$ and is given by $\ell(a_m^n) = n-m+1$.  
Any sequence $a_m^n$ with $m > n$ represents the empty
string. 
We will also use the notation $\eta_{a}^{b}=(x_{a}^{b},y_{a}^{b})$ for
a sequence 
$$
\eta_{j}=(x_{j},\,y_{j})\in\alphabet^{2}\qquad a\le j\le b\;.
$$  

Let $(\paire_{n})_{n\in\zentiers}=(X_{n},\,Y_{n})_{n\in\zentiers}$ 
be a stationary stochastic
chain taking values in $\alphabet^{2}$.

The {\sl blurring effect} is measured by the quantity
\begin{equation*}
\rho =\sup_{\substack{a\in \alphabet,\;k\ge 0\\
\eta_{-k}^{-1}\in (\alphabet^{2})^{k}}} \quad \sum_{b\neq a}
\proba\big(Y_{0}=b\tq X_{0}=a\,,\, \paire_{-k}^{-1}=\eta_{-k}^{-1}\big)\;.
\end{equation*}

Before presenting our main results, we need to introduce two hypotheses.

\textbf{Hypothesis H1} says that the blurring effect is smaller that 1

\begin{equation*}
\rho <1\;. \tag{H1}
\end{equation*}

\textbf{Hypothesis H2}  refers to the non-nullness the chains, namely
\begin{equation}
\alpha=
\inf_{\substack{a\in \alphabet,\, k\ge 1\\
\eta_{-k}^{-1}\in
    (\alphabet^{2})^{k}}}\proba(X_{0}=a\tq \paire_{-k}^{-1}= 
\eta_{-k}^{-1}\big)>0\;.\tag{H2}
\end{equation}

\begin{remark}
If the probability of discrepancy between the symbols $X_{0}$ and
$Y_{0}$ conditioned to the past satisfies for any $k\ge 0$
$$
\sup_{\eta_{-k}^{-1}\in(\alphabet^{2})^{k}}\proba\big(X_{0}\neq
Y_{0}\tq \paire_{-k}^{-1}=\eta_{-k}^{-1}\big)<{\alpha}\;,
$$
then hypothesis H1 holds. The proof is left to the reader.
\end{remark}

We will use the notations  (for $k\ge j\ge 1$)
\begin{equation}\label{exH3}
\somb_{j,\,k} = \sum_{\ell=1}^{j} \beta_{\ell,\,k} \;,
\end{equation}
where 
$$
\beta_{j,\,k}=\sup_{\substack{a\in\alphabet,\,x_{-j}^{-1}\in
    \alphabet^{j}\\
\eta_{-k}^{-j-1}\in(\alphabet^{2})^{k-j}\\
\tilde   \eta_{-k}^{-j-1}\in(\alphabet^{2})^{k-j}}}
\log\left(\frac{\proba(X_{0}=a\tq X_{-j}^{-1}=x_{-j}^{-1},\;
\paire_{-k}^{-j-1}= \eta_{-k}^{-j-1})}{
\proba(X_{0}=a\tq X_{-j}^{-1}=x_{-j}^{-1},\;
\paire_{-k}^{-j-1}= \tilde\eta_{-k}^{-j-1})}\right)\;.
$$

In our previous work (Collet {\sl et al.} 2008) we assumed
  (among other things) that  the
chains were of infinite order and satisfied continuity, namely
$\Gamma_{\infty,\,\infty}<\infty$. In the present work we do not require
these assumptions.

We may now state our main results. 
It will be convenient in order to alleviate the notation to define a
positive 
function $R$ on $(0,1]\times \entiers\times[0,1)$ by
$$
R(\alpha,\,k,\,\rho)=2+
 \frac{e^{2\,\somb_{k,\,k}}\;
\left[2\,\big(e^{\somb_{k,\,k}}-1\big)+\big(e
^{2\,\somb_{k,\,k}}-1\big)\right]}{\alpha\;(1-\rho)^{2}}
+2\;e^{\somb_{k,\,k}}\;\big(e^{\somb_{k,\,k}}-1\big)\;.
$$

\begin{theorem}\label{blurred} Assume that Hypotheses H1 and H2  hold.
Then  for any $j\geq0$,
$$
\sup_{a\in \alphabet, w_{-j}^{-1}\in \alphabet^{j}}\quad
\bigg|
\proba\big(Y_{0}=a\tq Y_{-j}^{-1}=w_{-j}^{-1}\big)-
\proba\big(X_{0}=a\tq X_{-j}^{-1}=w_{-j}^{-1}\big)\bigg|
\le\rho\;R(\alpha,\,j,\,\rho)\;.
$$
Moreover, for any $a\in \alphabet$, any integer $j$, any
$w_{-j}^{-1}\in \alphabet^{j}$, if  
$\rho\;R(\alpha,\,j,\,\rho)<\alpha$
we have
$$
1- \rho\;\frac{R(\alpha,\,j,\,\rho)}{\alpha}
\le \frac{\proba\big(Y_{0}=a\tq Y_{-j}^{-1}=w_{-j}^{-1}\big)}
{\proba\big(X_{0}=a\tq X_{-j}^{-1}=w_{-j}^{-1}\big)}
\le 1+ \rho\;\frac{R(\alpha,\,j,\,\rho)}{ \alpha}
$$
\end{theorem}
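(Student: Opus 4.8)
The plan is to deduce the ratio statement from the difference statement, so I would prove the first inequality and then divide. The basic properties of the marginal chain (Proposition~\ref{leX}) together with Hypothesis~H2 give the lower bound $\proba(X_0=a\tq X_{-j}^{-1}=w_{-j}^{-1})\ge\alpha$, since this conditional probability is an average of quantities $\proba(X_0=a\tq\paire_{-k}^{-1})\ge\alpha$. Dividing the first inequality by this positive quantity yields
$$\left|\frac{\proba(Y_0=a\tq Y_{-j}^{-1}=w_{-j}^{-1})}{\proba(X_0=a\tq X_{-j}^{-1}=w_{-j}^{-1})}-1\right|\le\frac{\rho\,R(\alpha,j,\rho)}{\alpha},$$
which is exactly the asserted two-sided bound; the extra assumption $\rho\,R(\alpha,j,\rho)<\alpha$ serves only to make the lower bound $1-\rho R/\alpha$ positive, so that the estimate is informative.

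For the first inequality I would insert the mixed quantity $\proba(X_0=a\tq Y_{-j}^{-1}=w_{-j}^{-1})$ and split the difference as
$$\big[\proba(Y_0=a\tq Y_{-j}^{-1}=w)-\proba(X_0=a\tq Y_{-j}^{-1}=w)\big]+\big[\proba(X_0=a\tq Y_{-j}^{-1}=w)-\proba(X_0=a\tq X_{-j}^{-1}=w)\big].$$
The first bracket is pure blurring: both terms condition on the same event $\{Y_{-j}^{-1}=w\}$, so writing each as the conditional average of the one-step probabilities $\proba(Y_0=a\tq\paire_{-k}^{-1})$, resp. $\proba(X_0=a\tq\paire_{-k}^{-1})$, it is bounded by the pointwise blurring estimate of the Lemma of Section~\ref{lemmas}, namely $|\proba(Y_0=a\tq\paire_{-k}^{-1})-\proba(X_0=a\tq\paire_{-k}^{-1})|\le\rho$, obtained by expanding over the value of $X_0$ and using that the off-diagonal transitions carry total mass at most $\rho$. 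This produces a clean $\rho$.

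The second bracket is where continuity and non-nullness enter. Both terms are conditional probabilities of the \emph{same} event $\{X_0=a\}$, differing only in whether the window is pinned through its $Y$-coordinates or its $X$-coordinates; I would estimate it by expressing each as an average of the joint-window conditional $\proba(X_0=a\tq\paire_{-j}^{-1})$ and comparing the two averaging measures. Proposition~\ref{leX}, obtained by revealing the symbols of $\paire_{-j}^{-1}$ one at a time and telescoping the single-step oscillations into the factor $e^{\somb_{j,j}}$, controls this joint-window conditional in terms of $\proba(X_0=a\tq X_{-j}^{-1}=\cdot)$ up to $e^{\somb_{j,j}}$; this is the source of the continuity contribution $2\,e^{\somb_{j,j}}(e^{\somb_{j,j}}-1)$ and of the weights $e^{2\somb_{j,j}}$, none of which involve $\alpha$. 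Comparing the two averaging measures is a reverse-conditioning problem in which the direction of the blurring is inverted: $\rho$ bounds the law of $Y_i$ given $X_i$ and the past, whereas here one conditions on the $Y$-coordinates, so I would proceed by a Bayes inversion, using Hypothesis~H2 to bound the denominators below by $\alpha$ and Hypothesis~H1 to keep the diagonal blurring masses above $1-\rho$ twice, which accounts for the factor $1/\big(\alpha(1-\rho)^2\big)$ in the middle term of $R$, together with the analogous properties of the $Y$-chain from Proposition~\ref{leY}.

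The main obstacle, and the step I expect to require the most care, is that these two effects must be combined rather than bounded separately. The reverse conditional probability $\proba(X_{-j}^{-1}\neq w\tq Y_{-j}^{-1}=w)$ is only of order $\rho/\alpha$ on its own, so estimating the second bracket as (change of measure)$\times$(sup of the integrand) would introduce a spurious $\alpha^{-1}$ that survives even when $\somb_{j,j}=0$, contradicting the value $R=2$ there. One must instead integrate the oscillation of $\proba(X_0=a\tq\cdots)$ against the \emph{difference} of the two measures, so that the $\alpha^{-1}$ and $(1-\rho)^{-2}$ factors always appear multiplied by the continuity differences $e^{\somb_{j,j}}-1$ and $e^{2\somb_{j,j}}-1$, which vanish with $\somb_{j,j}$; tracking these cancellations is the delicate heart of the estimate. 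Collecting the clean $\rho$ from the first bracket, the leading $\rho$ from the second (which together yield the constant $2$), the continuity term $2\rho\,e^{\somb_{j,j}}(e^{\somb_{j,j}}-1)$, and the reverse-conditioning term $\rho\,e^{2\somb_{j,j}}\big[2(e^{\somb_{j,j}}-1)+(e^{2\somb_{j,j}}-1)\big]/\big(\alpha(1-\rho)^2\big)$ reproduces the factor $\rho\,R(\alpha,j,\rho)$, the remaining bookkeeping being routine but needed to land on exactly the stated constant.
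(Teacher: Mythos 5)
Your opening and closing moves match the paper's: you insert the mixed quantity $\proba\big(X_{0}=a\tq Y_{-j}^{-1}=w_{-j}^{-1}\big)$, bound the pure-blurring bracket by $\rho$ (this is exactly Lemma \ref{undeplus}, and your pointwise argument for it is sound), and you obtain the ratio form by dividing by $\proba\big(X_{0}=a\tq X_{-j}^{-1}=w_{-j}^{-1}\big)\ge\alpha$, just as the paper does via Lemma \ref{nonj}. The genuine gap is in the second bracket, which is where the whole theorem lives. The one-shot comparison you propose --- writing both conditionals as averages of $\proba\big(X_{0}=a\tq\paire_{-j}^{-1}\big)$ and comparing the two averaging measures --- cannot work as stated. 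The two measures are supported on $\{x_{-j}^{-1}=w_{-j}^{-1}\}$ and $\{y_{-j}^{-1}=w_{-j}^{-1}\}$ respectively, and their total variation is governed by the \emph{whole-window} disagreement probability $\proba\big(X_{-j}^{-1}\neq w_{-j}^{-1}\tq Y_{-j}^{-1}=w_{-j}^{-1}\big)$, which grows like $j\rho$ (already for an i.i.d.\ chain observed through a memoryless noisy channel), not like $\rho/\alpha$ as you assert. Meanwhile the oscillation of $x\mapsto\proba\big(X_{0}=a\tq\paire_{-j}^{-1}=(x,w_{-j}^{-1})\big)$ is of order one whenever $x$ differs from $w$ near the origin, because the quantities $\beta_{\ell,k}$ only control coordinates lying \emph{behind} a pinned $X$-window, never the window itself. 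So any bound of the form (oscillation)$\times$(difference of measures) degrades to $O(\min(1,j\rho))$, which is not $\rho\,R(\alpha,j,\rho)$. You correctly diagnose that each $\alpha^{-1}(1-\rho)^{-2}$ must come paired with a factor vanishing with $\somb$, but ``integrate the oscillation against the difference of the two measures'' is a restatement of that desired cancellation, not a mechanism producing it; the delicate heart you defer is precisely what is missing.

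The paper's mechanism is a symbol-by-symbol interpolation through \emph{hybrid} conditionings:
$$
\proba\big(X_0=a\tq Y_{-k}^{-1}=w_{-k}^{-1}\big)-\proba\big(X_0=a\tq X_{-k}^{-1}=w_{-k}^{-1}\big)
$$
$$
=\sum_{j=0}^{k-1}\Big[\proba\big(X_0=a\tq X_{-j}^{-1}=w_{-j}^{-1},\,Y_{-k}^{-j-1}=w_{-k}^{-j-1}\big)
-\proba\big(X_0=a\tq X_{-j-1}^{-1}=w_{-j-1}^{-1},\,Y_{-k}^{-j-2}=w_{-k}^{-j-2}\big)\Big]\,,
$$
so that each increment swaps the conditioning of a \emph{single} coordinate from $Y$ to $X$. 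One then splits each increment over the value of that one symbol: the disagreement event has probability at most $\rho\,e^{2\somb_{j,k}}/\big(\alpha(1-\rho)^{2}\big)$, respectively $\rho\,e^{\somb_{j,k}}$ (Lemma \ref{lescon4}) --- per-symbol bounds, uniform in $j$, and this is where $\alpha$ and $(1-\rho)^{-2}$ enter --- while on that event the conditional probabilities being compared differ only by continuity factors $e^{\beta_{j+1,k}}-1$ and $e^{2\beta_{j+1,k}}-1$ (Lemma \ref{lescon2} and Proposition \ref{leX}); summing over $j$ with $\sum_{j}\big(e^{\beta_{j,k}}-1\big)\le e^{\somb_{k,k}}-1$ yields exactly $\rho\,R$. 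This telescoping (or, equivalently, a decomposition of the disagreement set by the position of the most recent discrepancy) is the idea your proposal lacks, and without it no bookkeeping recovers the stated constant. A secondary flaw: invoking Proposition \ref{leY} here is circular, since in the paper that proposition is itself deduced from Theorem \ref{blurred}; the proof must rely only on Lemmas \ref{nonj}, \ref{lescon2}, \ref{lescon4}, \ref{undeplus} and Proposition \ref{leX}.
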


\begin{theorem}\label{predire}
 Assume that Hypotheses H1 and H2 hold. Then for any integer $k\ge 0$, 
and for any
$\rho>0$ we have
$$
\sup_{a\in\alphabet \,,w_{-k}^{-1}\in\alphabet^{k}}
\big|\proba\big(Y_{0}=a\tq Y_{-k}^{-1}= w_{-k}^{-1}\big)-
\proba\big(X_{0}=a\tq Y_{-k}^{-1}=w_{-k}^{-1}\big)\big|\le \rho\;.
$$
If moreover 
$\rho\;R(\alpha,\,k,\,\rho)<\alpha $, we have for any 
 $a\in\alphabet$, and for any $y_{-k}^{-1}\in \alphabet^{k}$
$$
1-\frac{\rho}{\alpha-\rho\;
R(\alpha,\,k,\,\rho)}
\le
\frac{\proba\big(X_{0}=a\tq Y_{-k}^{-1}=y_{-k}^{-1},\,\big)}
{\proba\big(Y_{0}=a\tq Y_{-k}^{-1}=y_{-k}^{-1}\big)}\le
1+
\frac{\rho}{\alpha-\rho\;R(\alpha,\,k,\,\rho)}\;.
$$
\end{theorem}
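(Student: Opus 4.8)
The plan is to obtain both assertions from the coupling definition of the blurring effect $\rho$ together with Theorem \ref{blurred}, which is already at our disposal. For the first inequality, I would fix $a\in\alphabet$ and $w_{-k}^{-1}\in\alphabet^{k}$ and note that both conditional probabilities are taken with respect to the \emph{same} event $\{Y_{-k}^{-1}=w_{-k}^{-1}\}$, so their difference is the conditional expectation of $\mathbf{1}\{Y_{0}=a\}-\mathbf{1}\{X_{0}=a\}$. Since this integrand vanishes on $\{X_{0}=Y_{0}\}$ and has modulus at most $1$ elsewhere, it is dominated pointwise by $\mathbf{1}\{X_{0}\neq Y_{0}\}$, so that
$$
\big|\proba(Y_{0}=a\tq Y_{-k}^{-1}=w_{-k}^{-1})-\proba(X_{0}=a\tq Y_{-k}^{-1}=w_{-k}^{-1})\big|\le\proba(X_{0}\neq Y_{0}\tq Y_{-k}^{-1}=w_{-k}^{-1}).
$$
To bound the right-hand side I would condition on the richer information $(X_{0},\paire_{-k}^{-1})$ and use the tower property,
$$
\proba(X_{0}\neq Y_{0}\tq Y_{-k}^{-1}=w_{-k}^{-1})=\expectation\big[\,\proba(X_{0}\neq Y_{0}\tq X_{0},\paire_{-k}^{-1})\tq Y_{-k}^{-1}=w_{-k}^{-1}\,\big].
$$
On the event $\{X_{0}=a,\ \paire_{-k}^{-1}=\eta_{-k}^{-1}\}$ the inner probability equals $\sum_{b\neq a}\proba(Y_{0}=b\tq X_{0}=a,\paire_{-k}^{-1}=\eta_{-k}^{-1})$, which is at most $\rho$ by definition; averaging preserves this bound and yields the first claim.

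For the two-sided ratio estimate, set $p_{X}=\proba(X_{0}=a\tq Y_{-k}^{-1}=y_{-k}^{-1})$ and $p_{Y}=\proba(Y_{0}=a\tq Y_{-k}^{-1}=y_{-k}^{-1})$. The inequality just proved gives $|p_{X}-p_{Y}|\le\rho$, hence $|p_{X}/p_{Y}-1|\le\rho/p_{Y}$, and it only remains to bound $p_{Y}$ from below. Here I would apply Theorem \ref{blurred} with $j=k$ to the same string, obtaining $p_{Y}\ge\proba(X_{0}=a\tq X_{-k}^{-1}=y_{-k}^{-1})-\rho\,R(\alpha,k,\rho)$, and then invoke Hypothesis H2: writing $\proba(X_{0}=a\tq X_{-k}^{-1}=y_{-k}^{-1})$ as the conditional average over the compatible pair-pasts of $\proba(X_{0}=a\tq\paire_{-k}^{-1})$, each of which is at least $\alpha$, shows $\proba(X_{0}=a\tq X_{-k}^{-1}=y_{-k}^{-1})\ge\alpha$. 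Consequently $p_{Y}\ge\alpha-\rho\,R(\alpha,k,\rho)$, which is strictly positive under the standing assumption $\rho\,R(\alpha,k,\rho)<\alpha$. Combining this with $|p_{X}-p_{Y}|\le\rho$ produces $|p_{X}/p_{Y}-1|\le\rho/(\alpha-\rho\,R(\alpha,k,\rho))$, i.e. exactly the announced bounds.

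The argument has no deep obstacle; the care lies in the bookkeeping. First, the iterated-conditioning identity above is legitimate because $Y_{-k}^{-1}$ is a coordinate of $\paire_{-k}^{-1}$, so that $\sigma(Y_{-k}^{-1})\subseteq\sigma(X_{0},\paire_{-k}^{-1})$; well-definedness of the $Y$-conditional probabilities themselves follows from the analysis of the marginal chains in Section \ref{marginal} (Propositions \ref{leX} and \ref{leY}). Second, in the degenerate case $k=0$ the lower bound $\proba(X_{0}=a)\ge\alpha$ is recovered from H2 by stationarity, averaging over any fixed pair-past. I expect the genuinely delicate step to be the lower bound on $p_{Y}$: it is the single place where the asymmetry of the setup enters, since H2 constrains $X$ while the conclusion concerns $Y$, and one must check both that the $R(\alpha,k,\rho)$ appearing here is precisely the quantity produced by Theorem \ref{blurred} and that passing from conditioning on $\paire_{-k}^{-1}$ to conditioning on $X_{-k}^{-1}$ alone preserves the bound $\alpha$.
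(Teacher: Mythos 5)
Your proposal is correct and follows essentially the same route as the paper: your first bound is the paper's Lemma \ref{undeplus}, derived more compactly via the pointwise domination $|\mathbf{1}\{Y_{0}=a\}-\mathbf{1}\{X_{0}=a\}|\le\mathbf{1}\{X_{0}\neq Y_{0}\}$ and the tower property where the paper instead sums explicitly over $x_{-k}^{0}$, but both reduce to the definition of $\rho$ after conditioning on $X_{0}$ and the pair past. Your second part --- dividing by $p_{Y}$ and establishing $p_{Y}\ge\alpha-\rho\,R(\alpha,k,\rho)$ from Theorem \ref{blurred} together with the H2-averaging argument (the paper's Lemma \ref{nonj}) --- is exactly the paper's argument, which obtains the same lower bound as the third part of Proposition \ref{leY}.
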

The proofs will be given in Section \ref{proofs}.

\section{Properties of the marginal chains.}\label{marginal}

In this section we state some results about the two marginal chains $
(X_n) _{n\in\zentiers}$ and $(Y_n) _{n\in\zentiers}$ 
which follow from the Hypotheses H1, H2. These results will be useful latter.

\begin{proposition}\label{leX}
Under the hypothesis H2  the process $X$ satisfies
\begin{enumerate}
\item {\em Non-nullness}, that is for any $k\ge0$
  \begin{equation*}
 \inf_{a\in
 \alphabet,\; x_{-k}^{-1}\in  
\alphabet^{k}}\; \proba\big(X_{0}=a\tq  X_{-k}^{-1}=x_{-k}^{-1}\big)\ge \alpha
  \end{equation*}
  \item For any $k\ge j\ge 1$ we have
$$
   \sup_{\substack{ v_{-k}^{-1}, \,x_{-k}^{-1} \in  
  \alphabet^{k},\\
a\in \alphabet,\,x_{-j}^{-1}= v_{-j}^{-1} }} 
\;\;\log\left(\frac{\proba\big(X_{0}=a\tq
  X_{-k}^{-1}=x_{-k}^{-1}\big)}
{\proba\big(X_{0}=a\tq  X_{-k}^{-1}=v_{-k}^{-1}\big)}\right) 
\le 2\,\beta_{j,\,k}.
$$ 
\end{enumerate}
\end{proposition}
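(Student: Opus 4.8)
The plan is to reduce both assertions, which concern conditioning on the $X$-history alone, to Hypothesis H2 and to the definition of $\beta_{j,\,k}$, both of which are stated in terms of the full pair history $\paire$. The common device is to write a probability conditioned on $X_{-k}^{-1}$ as a convex combination of probabilities conditioned on $\paire$, the averaging being taken over the unobserved $Y$-coordinates.

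For item (1) I would use the tower property. For $k\ge 1$, writing $\eta_{-k}^{-1}=(x_{-k}^{-1},y_{-k}^{-1})$,
$$
\proba\big(X_{0}=a\tq X_{-k}^{-1}=x_{-k}^{-1}\big)=\sum_{y_{-k}^{-1}}\proba\big(X_{0}=a\tq \paire_{-k}^{-1}=\eta_{-k}^{-1}\big)\,\proba\big(Y_{-k}^{-1}=y_{-k}^{-1}\tq X_{-k}^{-1}=x_{-k}^{-1}\big).
$$
Each factor $\proba(X_{0}=a\tq \paire_{-k}^{-1}=\eta_{-k}^{-1})$ is at least $\alpha$ by H2, and the weights sum to one, so the average is at least $\alpha$. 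The case $k=0$ follows in the same way, conditioning first on $\paire_{-1}^{-1}$.

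For item (2) the mechanism is identical, except that the averaging is only over the \emph{distant} coordinates $Y_{-k}^{-j-1}$, while the distant $X$-coordinates stay fixed. The key observation is the set identity
$$
\{X_{-k}^{-1}=x_{-k}^{-1},\;Y_{-k}^{-j-1}=y_{-k}^{-j-1}\}=\{X_{-j}^{-1}=x_{-j}^{-1},\;\paire_{-k}^{-j-1}=\eta_{-k}^{-j-1}\},\qquad \eta_{-k}^{-j-1}=(x_{-k}^{-j-1},y_{-k}^{-j-1}),
$$
so that after averaging over $Y_{-k}^{-j-1}$ every term is precisely one of the probabilities appearing in the definition of $\beta_{j,\,k}$. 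I would then fix an arbitrary reference distant pair history $\eta^{*}$ and, using that the supremum defining $\beta_{j,\,k}$ is symmetric under exchange of its two arguments, write the two-sided bound $e^{-\beta_{j,\,k}}\,P^{*}\le \proba(X_{0}=a\tq X_{-j}^{-1}=x_{-j}^{-1},\,\paire_{-k}^{-j-1}=\eta)\le e^{\beta_{j,\,k}}\,P^{*}$, where $P^{*}=\proba(X_{0}=a\tq X_{-j}^{-1}=x_{-j}^{-1},\,\paire_{-k}^{-j-1}=\eta^{*})$. Averaging the upper bound against the weights $\proba(Y_{-k}^{-j-1}=\cdot\tq X_{-k}^{-1}=x_{-k}^{-1})$ gives $\proba(X_{0}=a\tq X_{-k}^{-1}=x_{-k}^{-1})\le e^{\beta_{j,\,k}}P^{*}$, and averaging the lower bound against $\proba(Y_{-k}^{-j-1}=\cdot\tq X_{-k}^{-1}=v_{-k}^{-1})$ gives $\proba(X_{0}=a\tq X_{-k}^{-1}=v_{-k}^{-1})\ge e^{-\beta_{j,\,k}}P^{*}$. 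Here the hypothesis $x_{-j}^{-1}=v_{-j}^{-1}$ is essential, since it ensures that both families of terms refer to the same recent $X$-history and hence to the same reference value $P^{*}$. Dividing the two inequalities and taking logarithms yields the bound $2\,\beta_{j,\,k}$.

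The step I expect to require the most care is the bookkeeping of the conditioning events: verifying the set identity above and checking that the recent coordinates $Y_{-j}^{-1}$ are marginalized identically on both sides (they are conditioned on in neither expression), so that the terms produced by the decomposition match the definition of $\beta_{j,\,k}$ verbatim rather than some related quantity. A minor point to record along the way is that all the conditional probabilities involved are bounded below by $\alpha>0$ — by the averaging argument of item (1) applied after conditioning on $X_{-j}^{-1}$ and $\paire_{-k}^{-j-1}$ and summing over the remaining $Y_{-j}^{-1}$ — which guarantees that $P^{*}$ and all the ratios are strictly positive, so that the logarithms are well defined.
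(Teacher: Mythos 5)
Your proposal is correct and follows essentially the same route as the paper: item (1) is the paper's Lemma \ref{nonj} argument (write the probability conditioned on the $X$-history as a convex combination, over the unobserved $Y$-coordinates, of probabilities conditioned on the full pair history and invoke H2), and item (2) is exactly the paper's proof, which decomposes $\proba\big(X_0=a\tq X_{-k}^{-1}=x_{-k}^{-1}\big)$ as an average over $Y_{-k}^{-j-1}$, fixes a reference distant pair history $\zeta_{-k}^{-j-1}$, and sandwiches each term between $e^{\pm\beta_{j,\,k}}$ times the reference probability before dividing the two resulting bounds. No gaps to report.
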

The proof will be given in Section \ref{proofs}.

\begin{proposition}\label{leY}
 Assume hypothesis H1 and H2 hold. Then 
for any $a\in\alphabet$, for any integers $k>j\ge 0$, for any 
$y_{-j}^{-1}\in \alphabet^{j}$, for any 
$y_{-k}^{-j-1}\in \alphabet^{k-j}$, for any 
$\tilde y_{-k}^{-j-1}\in \alphabet^{k-j}$, and for any
$\rho>0$ such that
$\rho\;R(\alpha,\,k,\,\rho)<\alpha $ we have
$$
\left(\frac{1-\rho\;R(\alpha,\,k,\,\rho)/\alpha}
{1+\rho\;R(\alpha,\,k,\,\rho)/\alpha}\right)^{2}
\; e^{-2\,\beta_{j,\,k}}\le 
\frac{\proba\big(Y_{0}=a\tq Y_{-j}^{-1}=y_{-j}^{-1},\,
Y_{-k}^{-j-1}=y_{-k}^{-j-1}\big)}
{\proba\big(Y_{0}=a\tq Y_{-j}^{-1}=y_{-j}^{-1},\,
Y_{-k}^{-j-1}=\tilde y_{-k}^{-j-1}\big)}
$$
$$
\le 
\left(\frac{1+\rho\;R(\alpha,\,k,\,\rho)/\alpha}
{1-\rho\;R(\alpha,\,k,\,\rho)/\alpha}\right)^{2}
\; e^{2\,\beta_{j,\,k}}
$$
We also have
$$
\left(\frac{1-\rho\;R(\alpha,\,k,\,\rho)/\alpha}
{1+\rho\;R(\alpha,\,k,\,\rho)/\alpha}\right)^{2}
\; e^{-2\,\beta_{j,\,k}}\le 
\frac{\proba\big(Y_{0}=a\tq Y_{-j}^{-1}=y_{-j}^{-1},\,
Y_{-k}^{-j-1}=y_{-k}^{-j-1}\big)}
{\proba\big(Y_{0}=a\tq Y_{-j}^{-1}=y_{-j}^{-1}\big)}
$$
$$
\le 
\left(\frac{1+\rho\;R(\alpha,\,k,\,\rho)/\alpha}
{1-\rho\;R(\alpha,\,k,\,\rho)/\alpha}\right)^{2}
\; e^{2\,\beta_{j,\,k}}\;,
$$
and
$$
\proba\big(Y_{0}=a\tq Y_{-j}^{-1}=y_{-j}^{-1}\big)\ge
\big(\alpha-\rho\;R(\alpha,\,j,\,\rho)\big)\;.
$$
\end{proposition}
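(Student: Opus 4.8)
The plan is to transfer to the chain $Y$ the non-nullness and continuity already established for $X$ in Proposition \ref{leX}, using the two bounds of Theorem \ref{blurred} as the bridge between the conditional laws of $X$ and of $Y$. Throughout I write $r=\rho\,R(\alpha,k,\rho)/\alpha$, and record at the outset two elementary facts used repeatedly: the standing hypothesis $\rho\,R(\alpha,k,\rho)<\alpha$ says exactly that $r<1$, so the denominators below are strictly positive; and since each $\beta_{\ell,k}\ge0$ we have $\somb_{j,k}\le\somb_{k,k}$ and hence $R(\alpha,j,\rho)\le R(\alpha,k,\rho)$ for $j\le k$, which guarantees that Theorem \ref{blurred} may be invoked at every order $\ell\le k$ and that the events $\{Y_{-j}^{-1}=y_{-j}^{-1}\}$ below carry positive probability.

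For the first (two-sided ratio) bound I fix $y_{-j}^{-1}$ and compare the two length-$k$ pasts $w=(y_{-j}^{-1},y_{-k}^{-j-1})$ and $\tilde w=(y_{-j}^{-1},\tilde y_{-k}^{-j-1})$, which agree on their $j$ most recent coordinates. The device is the telescoping factorisation
\begin{equation*}
\frac{\proba(Y_{0}=a\tq Y_{-k}^{-1}=w)}{\proba(Y_{0}=a\tq Y_{-k}^{-1}=\tilde w)}
=\frac{\proba(Y_{0}=a\tq Y_{-k}^{-1}=w)}{\proba(X_{0}=a\tq X_{-k}^{-1}=w)}\cdot
\frac{\proba(X_{0}=a\tq X_{-k}^{-1}=w)}{\proba(X_{0}=a\tq X_{-k}^{-1}=\tilde w)}\cdot
\frac{\proba(X_{0}=a\tq X_{-k}^{-1}=\tilde w)}{\proba(Y_{0}=a\tq Y_{-k}^{-1}=\tilde w)}.
\end{equation*}
The outer two factors are controlled by the ratio part of Theorem \ref{blurred} at order $k$: the first lies in $[1-r,1+r]$ and the third, a reciprocal of such a ratio, in $[\tfrac{1}{1+r},\tfrac{1}{1-r}]$; the middle factor is controlled by part (2) of Proposition \ref{leX}, applicable precisely because $w$ and $\tilde w$ coincide on $y_{-j}^{-1}$, and lies in $[e^{-2\beta_{j,k}},e^{2\beta_{j,k}}]$. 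Multiplying places the ratio in $\big[\tfrac{1-r}{1+r}e^{-2\beta_{j,k}},\,\tfrac{1+r}{1-r}e^{2\beta_{j,k}}\big]$; since $\tfrac{1+r}{1-r}\ge1$, this is a fortiori contained in the stated interval carrying the squared prefactor.

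The second ratio bound then follows by averaging. Abbreviate $N=\proba(Y_0=a\tq Y_{-j}^{-1}=y_{-j}^{-1},\,Y_{-k}^{-j-1}=y_{-k}^{-j-1})$ and $D_z=\proba(Y_0=a\tq Y_{-j}^{-1}=y_{-j}^{-1},\,Y_{-k}^{-j-1}=z)$ as $z$ ranges over $\alphabet^{k-j}$, and set $L=\big(\tfrac{1-r}{1+r}\big)^2e^{-2\beta_{j,k}}$, $U=\big(\tfrac{1+r}{1-r}\big)^2e^{2\beta_{j,k}}$. The first bound, applied with $\tilde w$ running over all $(y_{-j}^{-1},z)$, gives $L\,D_z\le N\le U\,D_z$ for every $z$. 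The tower property writes the denominator of the second bound as the convex combination
\begin{equation*}
\proba(Y_0=a\tq Y_{-j}^{-1}=y_{-j}^{-1})=\sum_{z}\proba(Y_{-k}^{-j-1}=z\tq Y_{-j}^{-1}=y_{-j}^{-1})\,D_z,
\end{equation*}
so, multiplying $L\,D_z\le N\le U\,D_z$ by the nonnegative weights and summing, $L\le N/\proba(Y_0=a\tq Y_{-j}^{-1}=y_{-j}^{-1})\le U$, which is exactly the claim.

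Finally, the non-nullness lower bound is immediate from the difference part of Theorem \ref{blurred} at order $j$, which gives $|\proba(Y_0=a\tq Y_{-j}^{-1}=y_{-j}^{-1})-\proba(X_0=a\tq X_{-j}^{-1}=y_{-j}^{-1})|\le\rho\,R(\alpha,j,\rho)$, together with part (1) of Proposition \ref{leX}, namely $\proba(X_0=a\tq X_{-j}^{-1}=y_{-j}^{-1})\ge\alpha$; subtracting yields $\proba(Y_0=a\tq Y_{-j}^{-1}=y_{-j}^{-1})\ge\alpha-\rho\,R(\alpha,j,\rho)$. I expect the only real obstacle to be bookkeeping rather than ideas: one must check that Theorem \ref{blurred} is legitimately applied at order $k$ (where $r<1$ is needed) and that the monotonicity $R(\alpha,j,\rho)\le R(\alpha,k,\rho)$ keeps every conditioning event positive, and one must treat the boundary case $j=0$ with the natural reading of $\beta_{0,k}$ extending Proposition \ref{leX}(2); the averaging step and the telescoping identity are otherwise routine.
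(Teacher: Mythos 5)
Your proof is correct and follows essentially the same route as the paper: the same three-factor telescoping through the conditional laws of $X$ (the ratio form of Theorem \ref{blurred} at order $k$ applied twice, Proposition \ref{leX}(2) for the middle factor, with the squared prefactor indeed being generous since the factorisation only yields a single power of $\tfrac{1+r}{1-r}$), the same tower-property averaging for the second bound, and the same combination of the difference bound of Theorem \ref{blurred} at order $j$ with non-nullness for the third. One inessential quibble: your claimed monotonicity $R(\alpha,j,\rho)\le R(\alpha,k,\rho)$ is never actually needed (the difference part of Theorem \ref{blurred} carries no hypothesis, and the ratio part is only invoked at order $k$ where the standing assumption applies), and your justification of it is off anyway, since $R(\alpha,j,\rho)$ is built from $\somb_{j,j}$ rather than $\somb_{j,k}$.
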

The proof will be given in Section \ref{proofs}.

\section{Auxiliary results}\label{lemmas}

In this section we collect together some technical lemmas that will be
used in the proof of the main results. In what follows we will always
assume, without further mention, that Hypotheses H1 and H2 are fulfilled.

\begin{lemma}\label{nonj}
For any $k\ge j>0$ we have 
$$
\inf_{\substack{a\in \alphabet,\, x_{-j}^{-1}\in \alphabet^{j}\\
\eta_{-k}^{-j-1}\in \alphabet^{k-j}}}\quad
\proba\big(X_{0}=a\tq \paire_{-k}^{-j-1}=\eta_{-k}^{-j-1},\,
X_{-j}^{-1}=x_{-j}^{-1}\big)\ge\alpha\,,
$$
and in particular
$$
\inf_{\substack{a\in \alphabet,\, x_{-j}^{-1}\in \alphabet^{j}}}
\quad
\proba\big(X_{0}=a\tq X_{-j}^{-1}=x_{-j}^{-1}\big)\ge\alpha\;.
$$
\end{lemma}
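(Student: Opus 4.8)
The plan is to reduce the statement to Hypothesis~H2, which already guarantees the lower bound $\alpha$ when one conditions on the \emph{full} pair history $\paire_{-k}^{-1}$. The only difference here is that on the recent block $[-j,-1]$ we are told the values of $X$ but not of $Y$; so the idea is to recover the full-history situation by averaging over the unobserved coordinates $Y_{-j}^{-1}$.

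Concretely, I would fix $a\in\alphabet$, $x_{-j}^{-1}\in\alphabet^{j}$ and $\eta_{-k}^{-j-1}\in(\alphabet^{2})^{k-j}$, write $A$ for the conditioning event $\{\paire_{-k}^{-j-1}=\eta_{-k}^{-j-1},\,X_{-j}^{-1}=x_{-j}^{-1}\}$, and partition $A$ according to the value of $Y_{-j}^{-1}$. For each $y_{-j}^{-1}\in\alphabet^{j}$ the corresponding piece of the partition is exactly the event that the full pair history equals the sequence $\eta(y_{-j}^{-1})$ obtained by appending the pairs $(x_{i},y_{i})$ on $[-j,-1]$ to $\eta_{-k}^{-j-1}$. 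The tower property then expresses $\proba(X_{0}=a\tq A)$ as the convex combination $\sum_{y_{-j}^{-1}}\proba(Y_{-j}^{-1}=y_{-j}^{-1}\tq A)\,\proba(X_{0}=a\tq\paire_{-k}^{-1}=\eta(y_{-j}^{-1}))$, the sum being restricted to those $y_{-j}^{-1}$ of positive conditional probability. Since each factor $\proba(X_{0}=a\tq\paire_{-k}^{-1}=\eta(y_{-j}^{-1}))$ is at least $\alpha$ by H2 (here $k\ge j>0$, so $k\ge 1$ and the hypothesis applies), and the weights are nonnegative and sum to one, the whole average is at least $\alpha$; taking the infimum over $a$, $x_{-j}^{-1}$ and $\eta_{-k}^{-j-1}$ gives the first inequality.

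Finally, the displayed ``in particular'' statement is just the special case $k=j$: then the far block $[-k,-j-1]$ is empty, $A$ collapses to $\{X_{-j}^{-1}=x_{-j}^{-1}\}$, and the first bound reads $\proba(X_{0}=a\tq X_{-j}^{-1}=x_{-j}^{-1})\ge\alpha$. I do not expect a genuine obstacle here; the only point that needs a little care is the bookkeeping of the decomposition over the unobserved block $Y_{-j}^{-1}$, the remark that a weighted average of quantities each $\ge\alpha$ is again $\ge\alpha$, and the discarding of the conditioning events of probability zero.
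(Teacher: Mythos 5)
Your proposal is correct and follows essentially the same route as the paper: the paper's proof also decomposes $\proba\big(X_{0}=a\tq \paire_{-k}^{-j-1}=\eta_{-k}^{-j-1},\,X_{-j}^{-1}=x_{-j}^{-1}\big)$ as a convex combination over the unobserved values $y_{-j}^{-1}$ of $Y_{-j}^{-1}$ and applies Hypothesis~H2 to each term. The only cosmetic difference is that the paper labels this decomposition ``Bayes formula'' rather than the tower property, and leaves the bookkeeping (including the ``in particular'' case) implicit, which you spell out.
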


\begin{proof}
By Bayes formula we have 
$$
\proba\big(X_{0}=a\tq \paire_{-k}^{-j-1}=\eta_{-k}^{-j-1},\,
X_{-j}^{-1}=x_{-j}^{-1}\big)
$$
$$
=\sum_{y_{-j}^{-1}}\proba\big(X_{0}=a\tq Y_{-j}^{-1}=y_{-j}^{-1},\,
\paire_{-k}^{-j-1}=\eta_{-k}^{-j-1},\,
X_{-j}^{-1}=x_{-j}^{-1}\big)\times
$$
$$
\proba\big(Y_{-j}^{-1}=y_{-j}^{-1}
\tq \paire_{-k}^{-j-1}=\eta_{-k}^{-j-1},\,
X_{-j}^{-1}=x_{-j}^{-1}\big)\, .
$$
Using Hypothesis H2 the result follows.
\end{proof}

\begin{lemma}\label{lescon2}

For any $\rho\in (0,1)$, any 
$k>j\ge 0$, any $x_{-k}^{0}\in\alphabet^{k+1}$ and any
$w_{-k}^{-j-1}\in\alphabet^{k-j}$,  we have 
$$
\bigl| \proba\big(X_0=x_0\tq  
X_{-j-1}^{-1}=x_{-j-1}^{-1},
Y_{-k}^{-j-1}=w_{-k}^{-j-1}\big) -
\proba\big(X_0=x_0\tq X_{-k}^{-1}=x_{-k}^{-1}\big)\bigr|
$$
$$
\leq 
e^{\beta_{j+1,\,k}}-1\;,
$$
and 
$$
\bigl| \proba\big(X_0=x_0\tq  
X_{-j-1}^{-1}=x_{-j-1}^{-1},
Y_{-k}^{-j-2}=w_{-k}^{-j-2}\big) -
\proba\big(X_0=x_0\tq X_{-k}^{-1}=x_{-k}^{-1}\big)
\bigr|
$$
$$
\leq e^{\beta_{j+1,\,k}}-1\;.
$$
\end{lemma}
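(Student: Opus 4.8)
The plan is to prove both inequalities by a single mechanism. Using Bayes' formula (the law of total probability), I would exhibit each of the two conditional probabilities being compared as a \emph{convex combination} of one and the same family of ``fully conditioned'' probabilities, each of which conditions on the recent block $X_{-j-1}^{-1}=x_{-j-1}^{-1}$ together with a completely specified pair-past. By the very definition of $\beta_{j+1,k}$, any two members of such a family differ by a multiplicative factor at most $e^{\beta_{j+1,k}}$. The point is then elementary: two convex combinations of numbers lying in a common interval $[m,M]$ differ by at most $M-m$; since the numbers are probabilities we have $M\le 1$, while $M/m\le e^{\beta_{j+1,k}}$ by the definition of $\beta_{j+1,k}$, so that $M-m=m\,(M/m-1)\le e^{\beta_{j+1,k}}-1$. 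This converts the multiplicative oscillation controlled by $\beta_{j+1,k}$ into the desired additive bound.

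Carrying this out for the \emph{second} inequality, I take the family
$$\mathcal{G}=\Big\{\,\proba\big(X_0=x_0\tq X_{-j-1}^{-1}=x_{-j-1}^{-1},\ \paire_{-k}^{-j-2}=\eta\big):\ \eta\in(\alphabet^2)^{k-j-1}\,\Big\}.$$
I expand $\proba\big(X_0=x_0\tq X_{-k}^{-1}=x_{-k}^{-1}\big)$ by conditioning additionally on $Y_{-k}^{-j-2}$; since the pair $(X_{-k}^{-1},Y_{-k}^{-j-2})$ determines $X_{-j-1}^{-1}$ together with the full pair-past $\paire_{-k}^{-j-2}$, this rewrites it as a convex combination of members of $\mathcal{G}$ (the $X$-part of $\eta$ pinned to $x_{-k}^{-j-2}$, the $Y$-part ranging). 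Symmetrically, I expand $\proba\big(X_0=x_0\tq X_{-j-1}^{-1}=x_{-j-1}^{-1},\ Y_{-k}^{-j-2}=w_{-k}^{-j-2}\big)$ by conditioning on $X_{-k}^{-j-2}$, writing it as a convex combination of members of $\mathcal{G}$ (now the $Y$-part of $\eta$ pinned to $w_{-k}^{-j-2}$, the $X$-part ranging). Both quantities thus lie in $[\min\mathcal{G},\max\mathcal{G}]$, and the oscillation bound above closes this inequality with the constant $e^{\beta_{j+1,k}}-1$.

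For the \emph{first} inequality the only structural change is that the conditioning event $Y_{-k}^{-j-1}$ now also fixes $Y_{-j-1}$, so the boundary position $-j-1$ contributes a full pair coordinate rather than just $X_{-j-1}$. I would run the identical argument with the family obtained by letting the specified pair-past reach position $-j-1$, namely $\big\{\,\proba\big(X_0=x_0\tq X_{-j-1}^{-1}=x_{-j-1}^{-1},\ \paire_{-k}^{-j-1}=\zeta\big)\,\big\}$, expanding $\proba\big(X_0=x_0\tq X_{-k}^{-1}=x_{-k}^{-1}\big)$ over $Y_{-k}^{-j-1}$ and expanding the first term over $X_{-k}^{-j-2}$. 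Both sides are again convex combinations of this one family, so their difference is at most its oscillation.

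The step I expect to be the main obstacle is precisely bounding the oscillation of this extended family by $e^{\beta_{j+1,k}}-1$. Letting the pair-past run to $-j-1$ means that $Y_{-j-1}$ itself varies across the family, whereas a direct appeal to the definition of $\beta$ only controls variations of the pair \emph{strictly before} position $-j-1$ (given $X_{-j-1}^{-1}$). Matching the oscillation to the index $j+1$ asserted in the statement, rather than to the coarser control one obtains by simply including the boundary position in the varying past, is the delicate point; it is the sole place where the two inequalities genuinely differ, everything else being the routine Bayes bookkeeping of the common-family argument.
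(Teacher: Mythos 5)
Your proof of the second inequality is correct, and its mechanism --- both quantities written as convex combinations of one and the same family of fully conditioned probabilities, with the multiplicative control $M/m\le e^{\beta_{j+1,\,k}}$ converted into the additive bound $M-m=m(M/m-1)\le e^{\beta_{j+1,\,k}}-1$ --- is exactly the paper's. The structural difference is that the paper obtains the second inequality by averaging the \emph{first} one over the value of $Y_{-j-1}$, while you prove it directly; your route is better, because the first inequality is where the trouble lies, and your argument establishes the second inequality independently of it.

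The obstacle you flag for the first inequality is not a bookkeeping issue you failed to push through: it is precisely the point where the paper's own proof breaks, and it cannot be repaired. In the first inequality the two conditionings agree on $X_{-j-1}^{-1}$ but differ both in the pair at positions $-k,\dots,-j-2$ \emph{and} in the value of $Y_{-j-1}$, and the definition of $\beta_{j+1,\,k}$ gives no control when $Y_{-j-1}$ varies. The paper's step announced as ``from the definition of $\beta_{j+1,\,k}$'' bounds the ratio of
$\proba\big(X_0=x_0\tq X_{-j-1}^{-1}=x_{-j-1}^{-1},\,X_{-k}^{-j-2}=\tilde x_{-k}^{-j-2},\,Y_{-k}^{-j-1}=w_{-k}^{-j-1}\big)$
to
$\proba\big(X_0=x_0\tq X_{-k}^{-1}=x_{-k}^{-1},\,Y_{-k}^{-j-1}=\tilde y_{-k}^{-j-1}\big)$
by $e^{\beta_{j+1,\,k}}$, even though $w_{-j-1}$ and $\tilde y_{-j-1}$ are unrelated there; what the definition actually licenses for this pair is the factor $e^{\beta_{j,\,k}}$, with position $-j-1$ absorbed into the varying pair-past --- your ``coarser control''. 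Consequently your extended-family argument does prove the first inequality for $j\ge 1$, but with $e^{\beta_{j,\,k}}-1$ on the right-hand side (which is all the paper's computation establishes), while for $j=0$ the needed quantity ``$\beta_{0,\,k}$'' is not defined and is not controlled by H1--H2.

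Moreover, the first inequality as stated is genuinely false, so no proof could close your gap. Take $\alphabet=\{0,1\}$, let $(U_n)$, $(V_n)$, $(\epsilon_n)$ be three independent i.i.d.\ fair-coin sequences, and set $X_n=U_n$, with $Y_n=U_{n+1}$ if $\epsilon_n=1$ and $Y_n=V_n$ if $\epsilon_n=0$. This pair chain is stationary; H1 holds with $\rho=1/2$, since $Y_0$ is independent of $\big(X_0,\paire_{-k}^{-1}\big)$; H2 holds with $\alpha=1/4$, since the conditional law of $X_0=U_0$ given the pair-past depends only on $Y_{-1}$, which equals $U_0$ with probability $1/2$ and an independent coin otherwise; and $\beta_{j,\,k}=0$ for all $k\ge j\ge1$, because every conditioning variable entering its definition is independent of $U_0$. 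Yet for $j=0$ and any $k\ge1$,
$$
\proba\big(X_0=w_{-1}\tq X_{-1}=x_{-1},\,Y_{-k}^{-1}=w_{-k}^{-1}\big)=\tfrac34\,,
\qquad
\proba\big(X_0=w_{-1}\tq X_{-k}^{-1}=x_{-k}^{-1}\big)=\tfrac12\,,
$$
so the left-hand side of the first inequality equals $1/4$ while the claimed bound is $e^{\beta_{1,\,k}}-1=0$. In short: your second-inequality proof is correct and is the argument to keep; the first inequality holds (by your same argument) only with $\beta_{j,\,k}$ in place of $\beta_{j+1,\,k}$, hence only for $j\ge1$, and is false as stated.
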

\begin{proof}
For any $k>j+1$ we have
$$
\proba\big(X_0=x_0\tq  
X_{-j-1}^{-1}=x_{-j-1}^{-1},\,
Y_{-k}^{-j-1}=w_{-k}^{-j-1}\big)=
$$
$$
\sum_{\tilde x_{-k}^{-j-2}}
\proba\big(X_0=x_0\tq  
X_{-j-1}^{-1}=x_{-j-1}^{-1},\,X_{-k}^{-j-2}=\tilde x_{-k}^{-j-2},\, 
Y_{-k}^{-j-1}=w_{-k}^{-j-1}\big)
\;\times
$$
$$
 \proba\big(X_{-k}^{-j-2}=\tilde x_{-k}^{-j-2}\tq  
X_{-j-1}^{-1}=x_{-j-1}^{-1},\,
Y_{-k}^{-j-1}=w_{-k}^{-j-1}\big)\;.
$$
We now have from the definition of $\beta_{j+1,\,k}$
$$
\sum_{{\tilde y}_{-k}^{-j-1}} \proba\big(X_0=x_0\tq 
X_{-k}^{-j-2}=\tilde x_{-k}^{-j-2},\,X_{-j-1}^{-1}=x_{-j-1}^{-1},\,
Y_{-k}^{-j-1}=w_{-k}^{-j-1}\big)\times
$$
$$
\proba\big(Y_{-k}^{-j-1}=
{\tilde y}_{-k}^{-j-1}\tq X_{-k}^{-1}=x_{-k}^{-1}\big)
$$
$$
\le e^{\beta_{j+1,\,k}}\sum_{{\tilde y}_{-k}^{-j-1}}
\proba\big(Y_{-k}^{-j-1}=
{\tilde y}_{-k}^{-j-1}\tq X_{-k}^{-1}=x_{-k}^{-1}\big)\;\times
$$
$$
\proba\big(X_0=x_0\tq 
X_{-k}^{-1}=x_{-k}^{-1},\,
Y_{-k}^{-j-1}={\tilde y}^{-j-1}\big)
= e^{\beta_{j+1,\,k}}\;\proba\big(X_0=x_0\tq X_{-k}^{-1}=x_{-k}^{-1}\big)\;.
$$
Therefore
$$
\proba\big(X_0=x_0\tq  
X_{-j-1}^{-1}=x_{-j-1}^{-1},\,
Y_{-k}^{-j-1}=w_{-k}^{-j-1}\big)
$$
$$
\le e^{\beta_{j+1,\,k}}\;\proba\big(X_0=x_0\tq  
X_{-k}^{-1}=x_{-k}^{-1}\big)\;\times
$$
$$
\sum_{\tilde x_{-k}^{-j-2}}\;
\proba\big(X_{-k}^{-j-2}=\tilde x_{-k}^{-j-2}\tq  
X_{-j-1}^{-1}=x_{-j-1}^{-1},\,
Y_{-k}^{-j-1}=w_{-k}^{-j-1}\big)
$$
$$
= e^{\beta_{j+1,\,k}}\;\proba\big(X_0=x_0\tq  
X_{-k}^{-1}=x_{-k}^{-1}\big)\;.
$$
We have similarly the lower bound
$$
\proba\big(X_0=x_0\tq  
X_{-j-1}^{-1}=x_{-j-1}^{-1},\,
Y_{-k}^{-j-1}=w_{-k}^{-j-1}\big)
$$
$$
\ge e^{-\beta_{j+1,\,k}}\;\proba\big(X_0=x_0\tq  
X_{-k}^{-1}=x_{-k}^{-1}\big)\;.
$$
Observing that
$$
1-e^{-\beta_{j+1,\,k}}=e^{-\beta_{j+1,\,k}}\;\left(e^{\beta_{j+1,\,k}}-1\right)\le
e^{\beta_{j+1,\,k}}-1\;,
$$
the lower bound follows.
For $k=j+1$ the estimation is similar and left to the reader.

To get the second result we write
$$
\proba\big(X_0=x_0\tq  
X_{-j-1}^{-1}=x_{-j-1}^{-1},
Y_{-k}^{-j-2}=w_{-k}^{-j-2}\big)
$$

$$
=\sum_{b\in\alphabet}
\proba\big(X_0=x_0\tq  
X_{-j-1}^{-1}=x_{-j-1}^{-1},
Y_{-k}^{-j-2}=w_{-k}^{-j-2},\, Y_{-j-1}=b\big)
$$
$$
\times \proba\big(Y_{-j-1}=b\tq  
X_{-j-1}^{-1}=x_{-j-1}^{-1},
Y_{-k}^{-j}=w_{-k}^{-j}\big)
$$
The result follows by applying the first estimate to each term in the
sum.
\end{proof}

\begin{lemma}\label{lescon3}
For any $j\ge 0$, for  any $k> j+1$ and any $w_{-k}^0\in\alphabet^{k+1}$ we have 
\begin{equation*}
\proba\big(Y_{-j-1}=w_{-j-1}\tq X_{-j}^{-1}=w_{-j}^{-1},\,
Y_{-k}^{-j-2}=\,w_{-k}^{-j-2}\big)\;\ge\;(1-\rho)\;\alpha\;e^{-\somb_{j,\,k}}.
\end{equation*}
\end{lemma}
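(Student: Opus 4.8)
The plan is to split the target lower bound into a factor $(1-\rho)\alpha$, coming from one use of blurring (H1) and one use of non-nullness (H2), and a factor $e^{-\somb_{j,k}}$ coming from the continuity of the $X$-chain. First I would drop to the diagonal by forcing $X_{-j-1}=w_{-j-1}$, and then apply Bayes' formula to detach the future block $X_{-j}^{-1}$ from the symbol at $-j-1$. Writing $C=\{Y_{-k}^{-j-2}=w_{-k}^{-j-2}\}$ and using $\{Y_{-j-1}=w_{-j-1}\}\cap C=\{Y_{-k}^{-j-1}=w_{-k}^{-j-1}\}$,
\begin{equation*}
\begin{split}
\proba\big(Y_{-j-1}=w_{-j-1}\tq X_{-j}^{-1}=w_{-j}^{-1},C\big)
&\ge \proba\big(X_{-j-1}=w_{-j-1},\,Y_{-j-1}=w_{-j-1}\tq C\big)\\
&\quad\times\frac{\proba\big(X_{-j}^{-1}=w_{-j}^{-1}\tq X_{-j-1}=w_{-j-1},\,Y_{-k}^{-j-1}=w_{-k}^{-j-1}\big)}
{\proba\big(X_{-j}^{-1}=w_{-j}^{-1}\tq Y_{-k}^{-j-2}=w_{-k}^{-j-2}\big)}.
\end{split}
\end{equation*}

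The first factor carries no conditioning on the future, so it is the easy part. Factoring it as $\proba(Y_{-j-1}=w_{-j-1}\tq X_{-j-1}=w_{-j-1},C)\,\proba(X_{-j-1}=w_{-j-1}\tq C)$ and summing out the unobserved deep past $X_{-k}^{-j-2}$ in each, one obtains a convex combination of one-step blurring probabilities, each $\ge 1-\rho$ by H1, times a convex combination of fully conditioned probabilities $\proba(X_{-j-1}=w_{-j-1}\tq\paire_{-k}^{-j-2})\ge\alpha$ (H2, Lemma \ref{nonj}). Hence the first factor is at least $(1-\rho)\alpha$.

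The ratio is the core. I would bound it below by $e^{-\somb_{j,k}}$ by factorising numerator and denominator into one-step $X$-predictions from $-j$ up to $-1$ and comparing each factor to the purely $X$-conditioned prediction $\proba(X_i=w_i\tq X_{-k}^{i-1}=\,\cdot\,)$ with the same (arbitrary) deep values, which then cancels between numerator and denominator. For the numerator the $i$-th factor is $\proba(X_i=w_i\tq X_{-j-1}^{i-1},Y_{-k}^{-j-1})$: here the $X$-conditioning reaches down to position $-j-1$, exactly the configuration of the first estimate of Lemma \ref{lescon2}, so after shifting to the origin the factor is within $e^{\beta_{\,i+j+1,\,k}}-1$ of the reference; as $i$ runs from $-j$ to $-1$ the indices $i+j+1$ run through $1,\dots,j$, producing precisely $\beta_{1,k},\dots,\beta_{j,k}$ and hence $\somb_{j,k}$. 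The denominator factors are treated the same way, the symbol missing at $-j-1$ being reinstated by one extra summation before invoking the second estimate of Lemma \ref{lescon2}.

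The main obstacle is turning this per-factor accounting into a clean multiplicative bound $e^{-\somb_{j,k}}$. Lemma \ref{lescon2} delivers \emph{additive} errors $e^{\beta_{\ell,k}}-1$, whereas the statement is multiplicative, so one must use the uniform lower bound $\alpha$ on the one-step probabilities (Lemma \ref{nonj}, Proposition \ref{leX}) to pass from additive to multiplicative control; and above all one must check that the coefficients are single-sided, i.e. that numerator and denominator are compared to the \emph{same} reference so the per-step bound is $e^{-\beta_{\ell,k}}$ and not $e^{-2\beta_{\ell,k}}$. Keeping the symbol $X_{-j-1}$ inside the recent window throughout—which is the whole point of making the diagonal reduction at the very first step—is what makes the indices land on $1,\dots,j$ rather than on $0,\dots,j-1$, and verifying this alignment is where I would concentrate the effort.
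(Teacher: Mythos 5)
Your reduction is sound as far as it goes: the diagonal restriction, the Bayes rearrangement, and the bound $(1-\rho)\alpha$ on the first factor (conditioning on the deep past and invoking H1 and H2 termwise) are correct and coincide with the easy part of the paper's argument. The lemma, however, lives entirely in your ratio, and there the plan has two genuine gaps. The first is a weight mismatch. The numerator $\proba\big(X_{-j}^{-1}=w_{-j}^{-1}\tq X_{-j-1}=w_{-j-1},\,Y_{-k}^{-j-1}=w_{-k}^{-j-1}\big)$ averages the hidden block $X_{-k}^{-j-2}$ according to $\mu(\cdot)=\proba\big(X_{-k}^{-j-2}=\cdot\tq X_{-j-1}=w_{-j-1},\,Y_{-k}^{-j-1}=w_{-k}^{-j-1}\big)$, while the denominator averages it according to $\nu(\cdot)=\proba\big(X_{-k}^{-j-2}=\cdot\tq Y_{-k}^{-j-2}=w_{-k}^{-j-2}\big)$. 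No per-factor comparison of one-step predictions can control a ratio of averages taken against two different mixing laws; and repairing this crudely (one can check $\mu\ge(1-\rho)\,\alpha\,\nu$ pointwise via H1--H2) costs an extra factor $(1-\rho)\alpha$, which destroys the stated constant. The paper never forms this ratio: it works with joint probabilities, keeps $X_{-k}^{-j-2}=x_{-k}^{-j-2}$ fixed inside a sum, and sums it out only at the very end against the joint weights $\proba\big(X_{-k}^{-j-2}=x_{-k}^{-j-2},\,Y_{-k}^{-j-2}=w_{-k}^{-j-2}\big)$, so that what is reconstituted is exactly $\proba\big(X_{-j}^{-1}=w_{-j}^{-1},\,Y_{-k}^{-j-2}=w_{-k}^{-j-2}\big)$ and no mismatched conditional averages ever appear.

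The second gap is the one-sidedness, which you flag but resolve incorrectly. Comparing numerator and denominator factors to a common pure-$X$ reference via Lemma \ref{lescon2} cannot give a per-step loss $e^{-\beta_{\ell,k}}$: even the multiplicative bounds inside the proof of that lemma are two-sided, $e^{-\beta}P\le N\le e^{\beta}P$ and $e^{-\beta}P\le D\le e^{\beta}P$ with the same reference $P$, so the best available conclusion is $N/D\ge e^{-2\beta}$, i.e.\ $e^{-2\somb_{j,k}}$ overall; and your proposed additive-to-multiplicative conversion through $\alpha$ is weaker still, giving per-step factors $1-(e^{\beta_{\ell,k}}-1)/\alpha$, which can even be negative. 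The mechanism that actually yields the one-sided $e^{-\somb_{j,k}}$ in the paper is a point-versus-average comparison, not a comparison through a third reference: on the diagonal, each factor $\proba\big(X_{s}=w_{s}\tq X_{-j}^{s-1}=w_{-j}^{s-1},\,(Y_{-j-1},X_{-j-1})=(w_{-j-1},w_{-j-1}),\,\paire_{-k}^{-j-2}\big)$ is bounded below by $e^{-\beta}$ times the same factor with the pair at $-j-1$ replaced by an arbitrary $(u,v)$, \emph{uniformly} in $(u,v)$; because the bound is uniform it survives averaging over $(u,v)$ with the weights $\proba\big((Y_{-j-1},X_{-j-1})=(u,v)\tq\paire_{-k}^{-j-2}\big)$, and this averaging reconstitutes precisely the block probability conditioned on $\paire_{-k}^{-j-2}$ alone. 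That single idea --- bound against every $(u,v)$ and then integrate $(u,v)$ out --- is what is missing from your proposal, and without it the tools you cite cannot produce the claimed bound.
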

\begin{proof}
We have
$$
\proba\big(Y_{-j-1}=w_{-j-1},\, X_{-j}^{-1}=w_{-j}^{-1},\,
Y_{-k}^{-j-2}=\,w_{-k}^{-j-2}\big)
$$
$$
=\sum_{x_{-k}^{-j-1}}
\proba\big(
Y_{-j-1}=\,w_{-j-1},\,X_{-j}^{-1}=w_{-j}^{-1}\tq
X_{-k}^{-j-1}=x_{-k}^{-j-1},\,Y_{-k}^{-j-2}=\,w_{-k}^{-j-2}\big)\;\times
$$
$$
\proba\big(
X_{-k}^{-j-1}=x_{-k}^{-j-1},\,Y_{-k}^{-j-2}=\,w_{-k}^{-j-2}\big)
$$
$$
\ge \sum_{x_{-k}^{-j-2}}
\proba\big(X_{-j-1}=w_{-j-1},\,
X_{-k}^{-j-2}=x_{-k}^{-j-2},\,Y_{-k}^{-j-2}=\,w_{-k}^{-j-2}\big)\;\times
$$
$$
\proba\big(
Y_{-j-1}=\,w_{-j-1},\,X_{-j}^{-1}=w_{-j}^{-1}\tq 
$$
$$
X_{-j-1}=w_{-j-1},\,
X_{-k}^{-j-2}=x_{-k}^{-j-2},\,Y_{-k}^{-j-2}=\,w_{-k}^{-j-2}\big)\;.
$$
We have for any $(u,v)\in\alphabet^{2}$
$$
\proba\big(
Y_{-j-1}=\,w_{-j-1},\,X_{-j}^{-1}=w_{-j}^{-1}
\tq 
X_{-k}^{-j-1}=x_{-k}^{-j-1},\,Y_{-k}^{-j-2}=\,w_{-k}^{-j-2}\big)
$$
$$
=\proba\big(
Y_{-j-1}=\,w_{-j-1}
\tq 
X_{-k}^{-j-1}=x_{-k}^{-j-1},\,Y_{-k}^{-j-2}=\,w_{-k}^{-j-2}\big)\;\times
$$
$$
\prod_{s=-j}^{-1}\;\proba\big(X_{s}=w_{s}\tq
X_{-j}^{-s-1}=w_{-j}^{-s-1},\,
Y_{-j-1}=\,w_{-j-1},\,
X_{-j-1}=x_{-j-1},\,
$$
$$
\paire_{-k}^{-j-2}=(x_{-k}^{-j-2},w_{-k}^{-j-2})\big)
$$
$$
\ge e^{-\somb_{j,\,k}}\;\proba\big(
Y_{-j-1}=\,w_{-j-1}
\tq
X_{-k}^{-j-1}=x_{-k}^{-j-1},\,Y_{-k}^{-j-2}=\,w_{-k}^{-j-2}\big)\;\times
$$
$$
\prod_{s=-j}^{-1}\!\!\!\proba\big(X_{s}=w_{s}\tq
X_{-j}^{-s-1}=w_{-j}^{-s-1},
Y_{-j-1}=\,u,
X_{-j-1}=v,
\paire_{-k}^{-j-2}=(x_{-k}^{-j-2},w_{-k}^{-j-2})
\big).
$$
If $x_{-j-1}=w_{-j-1}$, we get using the
definition of $\rho$
$$
\proba\big(
Y_{-j-1}=\,w_{-j-1},\,X_{-j}^{-1}=w_{-j}^{-1}
\tq 
X_{-k}^{-j-1}=x_{-k}^{-j-1},\,Y_{-k}^{-j-2}=\,w_{-k}^{-j-2}\big)
$$
$$
\ge e^{-\somb_{j,\,k}}\;(1-\rho)
\prod_{s=-j}^{-1}\;\proba\big(X_{s}=w_{s}\tq
X_{-j}^{-s-1}=w_{-j}^{-s-1},\,
Y_{-j-1}=u,\,
X_{-j-1}=v,
$$
$$
\paire_{-k}^{-j-2}=(x_{-k}^{-j-2},w_{-k}^{-j-2})\,
\big)
$$
$$
= e^{-\somb_{j,\,k}}\;(1-\rho)\;
\proba\big(X_{-j}^{-1}=w_{-j}^{-1}\tq
Y_{-j-1}=u,\,
X_{-j-1}=v,
\paire_{-k}^{-j-2}=\eta_{-k}^{-j-2}
\big)\;.
$$
We can write
$$
\proba\big(
Y_{-j-1}=w_{-j-1},\,X_{-j}^{-1}=w_{-j}^{-1}
\tq 
X_{-j-1}=w_{-j-1},
X_{-k}^{-j-2}=x_{-k}^{-j-2},Y_{-k}^{-j-2}=w_{-k}^{-j-2}\big)
$$
$$
=\sum_{(u,v)\in\alphabet^{2}}\!\!\!\!\!
\proba\big(Y_{-j-1}=u,\,
X_{-j-1}=v\tq
 \paire_{-k}^{-j-2}=(x_{-k}^{-j-2},w_{-k}^{-j-2})\big)\, \times
$$
$$
\proba\big(
Y_{-j-1}=w_{-j-1},\,X_{-j}^{-1}=w_{-j}^{-1}
\,\big|
X_{-j-1}=w_{-j-1},\,X_{-k}^{-j-2}=x_{-k}^{-j-2},\,Y_{-k}^{-j-2}=w_{-k}^{-j-2}\big)
$$
$$
\ge e^{-\somb_{j,\,k}}\;(1-\rho)\;\sum_{(u,v)\in\alphabet^{2}}
\proba\big(Y_{-j-1}=u,\,
X_{-j-1}=v\tq \paire_{-k}^{-j-2}=(x_{-k}^{-j-2},w_{-k}^{-j-2})
\big)\;\times
$$
$$
\proba\big(X_{-j}^{-1}=w_{-j}^{-1}\tq
Y_{-j-1}=u,\,
X_{-j-1}=v,
\paire_{-k}^{-j-2}=(x_{-k}^{-j-2},w_{-k}^{-j-2})
\big)
$$
$$
=e^{-\somb_{j,\,k}}\;(1-\rho)\;\proba\big(X_{-j}^{-1}=w_{-j}^{-1}\tq
\paire_{-k}^{-j-2}=(x_{-k}^{-j-2},w_{-k}^{-j-2})\big)
$$
We have by Hypothesis H2
$$
\proba\big(
X_{-k}^{-j-1}=x_{-k}^{-j-1},\,Y_{-k}^{-j-2}=\,w_{-k}^{-j-2}\big)
\ge \alpha\;\proba\big(
X_{-k}^{-j-2}=x_{-k}^{-j-2},\,Y_{-k}^{-j-2}=\,w_{-k}^{-j-2}\big).
$$
Combining the above estimates we get
$$
\proba\big(Y_{-j-1}=w_{-j-1},\, X_{-j}^{-1}=w_{-j}^{-1},\,
Y_{-k}^{-j-2}=\,w_{-k}^{-j-2}\big)
$$
$$
\ge e^{-\somb_{j,\, k}}\;\alpha\;(1-\rho)
\sum_{x_{-k}^{-j-2}}
\proba\big(X_{-j}^{-1}=w_{-j}^{-1}\tq
\paire_{-k}^{-j-2}=(x_{-k}^{-j-2},w_{-k}^{-j-2})\big)\;\times
$$
$$
\proba\big(
X_{-k}^{-j-2}=x_{-k}^{-j-2},\,Y_{-k}^{-j-2}=\,w_{-k}^{-j-2}\big)
$$
$$
=e^{-\somb_{j,\,k}}\;\alpha\;(1-\rho)\;
\proba\big(X_{-j}^{-1}=w_{-j}^{-1},\,
\,Y_{-k}^{-j-2}=\,w_{-k}^{-j-2}\big)\;,
$$
and the result follows.
\end{proof}

\begin{lemma}\label{lescon4}
For any $\rho\in (0,1)$, any $k>j\ge 0$ and any $w_{-k}^0\in\alphabet^{k+1}$,
\begin{equation*}
\proba\big(X_{-j-1}\neq w_{-j-1}\tq
\,X_{-j}^{-1}=\,w_{-j}^{-1},\,
Y_{-k}^{-j-1}=\,w_{-k}^{-j-1}\big)
\;\le\;  \frac{\rho\,e^{2\,\somb_{j,\,k}}}{\alpha\,(1-\rho)^{2}}\,.
\end{equation*}
We have also 
\begin{equation*}
\proba\big(Y_{-j-1}\neq w_{-j-1}\tq
\,X_{-j-1}^{-1}=\,w_{-j-1}^{-1},\,
Y_{-k}^{-j-2}=\,w_{-k}^{-j-2}\big)
\;\le\; \rho\;e^{\somb_{j,\,k}}\;.
\end{equation*}
\end{lemma}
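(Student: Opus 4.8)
The plan is to derive both inequalities from estimates that are uniform over a \emph{completely specified} pair past, reducing to them by the law of total probability. In each statement the conditioning fixes $Y_{-k}^{-j-2}=w_{-k}^{-j-2}$ but leaves $X_{-k}^{-j-2}$ free, so I would first decompose each conditional probability as a convex combination over the unknown coordinates $x_{-k}^{-j-2}$, freezing the full pair past to $\paire_{-k}^{-j-2}=(x_{-k}^{-j-2},w_{-k}^{-j-2})$. It then suffices to prove the announced bounds for the conditional probabilities in which $\paire_{-k}^{-j-2}$ is entirely prescribed, uniformly in $x_{-k}^{-j-2}$, since averaging these bounds against $\proba\big(X_{-k}^{-j-2}=x_{-k}^{-j-2}\tq\cdots\big)$ preserves them.

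The shared mechanism is a Bayes ``flip'' that removes the future coordinates $X_{-j}^{-1}$ from the conditioning. Once the pair past is frozen, each conditional of interest takes the form
$$
\frac{\sum_{c\neq w_{-j-1}} p_{c}\,Q_{c}}{\sum_{c} p_{c}\,Q_{c}}\;,
\qquad
Q_{c}=\proba\big(X_{-j}^{-1}=w_{-j}^{-1}\tq \theta_{c}\big)\;,
$$
where $c$ is the single coordinate varied at time $-j-1$, the event $\theta_{c}$ freezes the pair at time $-j-1$ with the varied coordinate equal to $c$ together with $\paire_{-k}^{-j-2}=(x_{-k}^{-j-2},w_{-k}^{-j-2})$, the weight $p_{c}$ is that of the corresponding configuration at $-j-1$, and $Q_{c}$ is a product of one-step conditional probabilities of $X_{-j},\dots,X_{-1}$. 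Changing $c$ alters only the pair configuration at the single time $-j-1$, so each factor of $Q_{c}$ is perturbed multiplicatively by an amount governed by the $\beta_{\ell,\,k}$ and the whole product by at most $e^{\somb_{j,\,k}}$; this is precisely the estimate established inside the proof of Lemma \ref{lescon3}. Hence $\max_{c}Q_{c}/\min_{c}Q_{c}\le e^{\somb_{j,\,k}}$, and bounding the numerator by $\max_{c}Q_{c}$ and the denominator below by $\min_{c}Q_{c}$ yields
$$
\frac{\sum_{c\neq w_{-j-1}} p_{c}\,Q_{c}}{\sum_{c} p_{c}\,Q_{c}}\le e^{\somb_{j,\,k}}\;\frac{\sum_{c\neq w_{-j-1}} p_{c}}{\sum_{c} p_{c}}\;.
$$

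For the second inequality the varied coordinate is $c=Y_{-j-1}$, while $X_{-j-1}=w_{-j-1}$ belongs to the conditioning; then $\sum_{c}p_{c}=1$ and the residual factor equals
$$
\sum_{c\neq w_{-j-1}}p_{c}=\proba\big(Y_{-j-1}\neq w_{-j-1}\tq X_{-j-1}=w_{-j-1},\,\paire_{-k}^{-j-2}=(x_{-k}^{-j-2},w_{-k}^{-j-2})\big)\le\rho
$$
straight from the definition of $\rho$ and stationarity. This gives the bound $\rho\,e^{\somb_{j,\,k}}$ uniformly in $x_{-k}^{-j-2}$, and the total-probability decomposition of the first paragraph concludes the proof of the second estimate.

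The first inequality carries the main difficulty: here the conditioning fixes $Y_{-j-1}=w_{-j-1}$ and asks for the law of $X_{-j-1}$, a ``backward'' question not controlled by $\rho$ alone, which is why the bound must also invoke Hypothesis H2. After the flip (now $c=X_{-j-1}$, with $Y_{-j-1}=w_{-j-1}$ frozen) the residual factor is
$$
\proba\big(X_{-j-1}\neq w_{-j-1}\tq Y_{-j-1}=w_{-j-1},\,\paire_{-k}^{-j-2}=(x_{-k}^{-j-2},w_{-k}^{-j-2})\big)\;.
$$
I would bound its numerator by $\rho$: expanding over $X_{-j-1}=a$, each term carries the factor $\proba\big(Y_{-j-1}=w_{-j-1}\tq X_{-j-1}=a,\dots\big)$ with $a\neq w_{-j-1}$, a single off-diagonal blurring probability, hence at most $\rho$. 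I would bound its denominator from below by $\alpha(1-\rho)$, keeping only the diagonal contribution $X_{-j-1}=w_{-j-1}$ and combining Hypothesis H2 with $\proba\big(Y_{-j-1}=w_{-j-1}\tq X_{-j-1}=w_{-j-1},\dots\big)\ge 1-\rho$. The residual factor is therefore at most $\rho/\big(\alpha(1-\rho)\big)$, giving the uniform bound $\rho\,e^{\somb_{j,\,k}}/\big(\alpha(1-\rho)\big)$, which already lies below the announced $\rho\,e^{2\,\somb_{j,\,k}}/\big(\alpha(1-\rho)^{2}\big)$ since $e^{\somb_{j,\,k}}\ge 1$ and $1-\rho\le 1$; averaging over $x_{-k}^{-j-2}$ then completes the argument. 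The step I expect to require the most care is this backward estimate, together with verifying that the flip genuinely isolates a product $Q_{c}$ to which the $\somb_{j,\,k}$ bound of Lemma \ref{lescon3} applies.
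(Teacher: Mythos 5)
Your proof is correct, and for the first (harder) inequality it takes a genuinely different route from the paper's. The paper never freezes $X_{-k}^{-j-2}$: it first bounds the joint quantity
$\proba\big(X_{-j-1}\neq w_{-j-1},\,Y_{-j-1}=w_{-j-1}\tq X_{-j}^{-1}=w_{-j}^{-1},\,Y_{-k}^{-j-2}=w_{-k}^{-j-2}\big)$ by $\rho\,e^{\somb_{j,\,k}}/(1-\rho)$ (definition of $\rho$, the product-ratio mechanism, Bayes and H1), and then divides by the separate lower bound of Lemma \ref{lescon3}, namely $\proba\big(Y_{-j-1}=w_{-j-1}\tq X_{-j}^{-1}=w_{-j}^{-1},\,Y_{-k}^{-j-2}=w_{-k}^{-j-2}\big)\ge(1-\rho)\,\alpha\,e^{-\somb_{j,\,k}}$; each of these two steps pays its own factor $e^{\somb_{j,\,k}}$ and $(1-\rho)$, which is exactly where the constant $\rho\,e^{2\,\somb_{j,\,k}}/\big(\alpha\,(1-\rho)^{2}\big)$ comes from. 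By conditioning on the full pair past from the start and comparing numerator and denominator through the single ratio $\max_{c}Q_{c}/\min_{c}Q_{c}\le e^{\somb_{j,\,k}}$, you bypass the statement of Lemma \ref{lescon3} altogether (you only borrow the product-ratio estimate inside its proof), and your backward Bayes step (numerator $\le\rho$ because each term carries one off-diagonal blurring probability, denominator $\ge\alpha(1-\rho)$ by H2 and the diagonal bound) gives the strictly sharper constant $\rho\,e^{\somb_{j,\,k}}/\big(\alpha(1-\rho)\big)$, which indeed implies the stated bound; propagated forward, it would even improve $R(\alpha,k,\rho)$, and it makes Lemma \ref{lescon3} superfluous since that lemma is used nowhere else. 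Your second inequality, by contrast, is essentially the paper's own argument reorganized: where you bound by $\max_{c}Q_{c}/\min_{c}Q_{c}$, the paper averages the transition product over the values $u$ of $Y_{-j-1}$, and both reduce to $\sum_{c\neq w_{-j-1}}p_{c}\le\rho$. One caveat, which is not a gap relative to the paper: the claim that altering the pair coordinate at the single time $-j-1$ perturbs the product $Q_{c}$ by at most $e^{\somb_{j,\,k}}$ is slightly loose, since the factor at time $s$ is governed by a $\beta$ with shifted indices; the paper's proofs of Lemmas \ref{lescon3} and \ref{lescon4} gloss over this in exactly the same way, so your appeal to that mechanism is on the same footing as the original.
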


\begin{proof}
We have using the definition of $\rho$ 
$$
\proba\big(
Y_{-j-1}=\,w_{-j-1},\,X_{-j}^{-1}=w_{-j}^{-1}
\tq 
Y_{-k}^{-j-2}=\,w_{-k}^{-j-2},\,
X_{-k}^{-j-1}=x_{-k}^{-j-1}\big)
$$
$$
=\proba\big(
Y_{-j-1}=\,w_{-j-1}
\tq 
Y_{-k}^{-j-2}=\,w_{-k}^{-j-2},\,
X_{-k}^{-j-1}=x_{-k}^{-j-1}\big)\;\times
$$
$$
\prod_{s=-j}^{-1}\!\!\proba\big(X_{s}=w_{s}\tq
Y_{-j-1}=\,w_{-j-1},\,X_{-j}^{s-1}=w_{-j}^{s-1},\,
Y_{-k}^{-j-2}=\,w_{-k}^{-j-2},\,
X_{-k}^{-j-1}=x_{-k}^{-j-1}\big)
$$
$$
\le \rho\;\prod_{s=-j}^{-1}\proba\big(X_{s}=w_{s}\tq
$$
$$
Y_{-j-1}=\,w_{-j-1},\,X_{-j}^{s-1}=w_{-j}^{s-1},\,
Y_{-k}^{-j-2}=\,w_{-k}^{-j-2},\,
X_{-k}^{-j-1}=x_{-k}^{-j-1}\big)\;.
$$
This quantity is bounded above by
$$
\rho\; e^{\somb_{j,\,k}} \;\prod_{s=-j}^{-1}\proba\big(X_{s}=w_{s}\tq
Y_{-j-1}=\,x_{-j-1},\,X_{-j}^{s-1}=w_{-j}^{s-1},\,
$$
$$
Y_{-k}^{-j-2}=\,w_{-k}^{-j-2},\,
X_{-k}^{-j-1}=x_{-k}^{-j-1}\big)
$$
$$
=\rho\; e^{\somb_{j,\,k}}\; \proba\big(X_{-j}^{-1}=w_{-j}^{-1}\tq
Y_{-j-1}=\,x_{-j-1},\,
\,Y_{-k}^{-j-2}=\,w_{-k}^{-j-2},\,
X_{-k}^{-j-1}=x_{-k}^{-j-1}\big)
$$
$$
=\frac{\rho\; e^{\somb_{j,\,k}}\;
\proba\big(X_{-j}^{-1}=w_{-j}^{-1},\,
Y_{-j-1}=\,x_{-j-1}\tq
\,Y_{-k}^{-j-2}=\,w_{-k}^{-j-2},\,
X_{-k}^{-j-1}=x_{-k}^{-j-1}\big)
}{\proba\big(
Y_{-j-1}=\,x_{-j-1}\tq 
Y_{-k}^{-j-2}=\,w_{-k}^{-j-2},\,
X_{-k}^{-j-1}=x_{-k}^{-j-1}\big)}
$$
$$
\le \frac{\rho\; e^{\somb_{j,\,k}}}{1-\rho}\;
\proba\big(X_{-j}^{-1}=w_{-j}^{-1},\,
Y_{-j-1}=\,x_{-j-1}\tq
\,Y_{-k}^{-j-2}=\,w_{-k}^{-j-2},\,
X_{-k}^{-j-1}=x_{-k}^{-j-1}\big)
$$
using the definition of $\rho$ and hypothesis H1.
This last quantity is obviously bounded above by
$$
\frac{\rho\; e^{\somb_{j,\,k}}}{1-\rho}\;\proba\big(X_{-j}^{-1}=w_{-j}^{-1}
\tq
Y_{-k}^{-j-2}=\,w_{-k}^{-j-2},\,
X_{-k}^{-j-1}=x_{-k}^{-j-1}\big)
$$
and we get
$$
\proba\big(X_{-j-1}\neq w_{-j-1},\,
\,X_{-j}^{-1}=\,w_{-j}^{-1},\,
Y_{-k}^{-j-1}=\,w_{-k}^{-j-1}\big)=
$$
$$
\sum_{x_{-k}^{-j-2},\,x_{-j-1}\neq w_{-j-1}}\!\!\!\!\!\!\!\!\!\!\!\!\!\!\!\!
\proba\big(X_{-j}^{-1}=w_{-j}^{-1},\,Y_{-j-1}=w_{-j-1}
\tq
Y_{-k}^{-j-2}=\,w_{-k}^{-j-2},\,
X_{-k}^{-j-1}=x_{-k}^{-j-1}\big)
$$
$$
\times \;\proba\big(
X_{-k}^{-j-1}=\,x_{-k}^{-j-1},\,Y_{-k}^{-j-2}=\,w_{-k}^{-j-2}\big)\;.
$$
$$
\le \frac{\rho\; e^{\somb_{j,\,k}}}{1-\rho}\;
\sum_{x_{-k}^{-j-1},\,x_{-j-1}\neq w_{-j-1}}
\proba\big(X_{-j}^{-1}=w_{-j}^{-1},\,Y_{-k}^{-j-2}=\,w_{-k}^{-j-2},\,
X_{-k}^{-j-1}=x_{-k}^{-j-1}\big)
$$
$$
\le \frac{\rho\; e^{\somb_{j,\,k}}}{1-\rho}\;
\proba\big(X_{-j}^{-1}=w_{-j}^{-1},\,Y_{-k}^{-j-2}=\,w_{-k}^{-j-2}\big)\;.
$$
We have obtained the bound
$$
\proba\big(X_{-j-1}\neq w_{-j-1},\,Y_{-j-1}=w_{-j-1}\tq
\,X_{-j}^{-1}=w_{-j}^{-1},\,
Y_{-k}^{-j-2}=w_{-k}^{-j-2}\big)
\le 
\frac{\rho\; e^{\somb_{j,\,k}}}{1-\rho}\;.
$$
Using  Lemma \ref{lescon3} and hypothesis H1
the first result follows.

In order to prove the second result, we start with the identity
$$
\proba\big(Y_{-j-1}\neq w_{-j-1},
\,X_{-j-1}^{-1}=\,w_{-j-1}^{-1},\,
Y_{-k}^{-j-2}=\,w_{-k}^{-j-2}\big)
=\sum_{c\neq w_{-j-1}} \sum_{x_{-k}^{-j-2}}
$$
$$
\proba\big(Y_{-j-1}=c,
\,X_{-j-1}^{-1}=\,w_{-j-1}^{-1},\,X_{-k}^{-j-2}=\,x_{-k}^{-j-2},\,
Y_{-k}^{-j-2}=\,w_{-k}^{-j-2}\big)
$$
$$
=\sum_{c\neq w_{-j-1}} \sum_{x_{-k}^{-j-2}}
\proba\big(X_{-j-1}=\,w_{-j-1},\,X_{-k}^{-j-2}=\,x_{-k}^{-j-2},\,
Y_{-k}^{-j-2}=\,w_{-k}^{-j-2}\big)\;\times
$$
$$
\proba\big(Y_{-j-1}=c,
\,X_{-j}^{-1}=w_{-j}^{-1}\tq 
X_{-j-1}=\,w_{-j-1},\,X_{-k}^{-j-2}=x_{-k}^{-j-2},\,
Y_{-k}^{-j-2}=w_{-k}^{-j-2}\big).
$$
We have
$$
\proba\big(Y_{-j-1}=c,
\,X_{-j}^{-1}=\,w_{-j}^{-1}\tq 
X_{-j-1}=\,w_{-j-1},\,X_{-k}^{-j-2}=x_{-k}^{-j-2},\,
Y_{-k}^{-j-2}=w_{-k}^{-j-2}\big)
$$
$$
=\proba\big(Y_{-j-1}=c
\tq 
X_{-j-1}=\,w_{-j-1},\,X_{-k}^{-j-2}=\,x_{-k}^{-j-2},\,
Y_{-k}^{-j-2}=\,w_{-k}^{-j-2}\big)
\;\times
$$
$$
\prod_{s=-j}^{-1} \proba\big(X_{s}=w_{s}\tq Y_{-j-1}=c,
\,X_{-j}^{s-1}=\,w_{-j}^{s-1},
$$
$$
X_{-j-1}=\,w_{-j-1},\,X_{-k}^{-j-2}=\,x_{-k}^{-j-2},\,
Y_{-k}^{-j-2}=\,w_{-k}^{-j-2}\big)
$$
$$
\le e^{\somb_{j,\,k}}\; \proba\big(Y_{-j-1}=c
\tq 
X_{-j-1}=\,w_{-j-1},\,X_{-k}^{-j-2}=\,x_{-k}^{-j-2},\,
Y_{-k}^{-j-2}=\,w_{-k}^{-j-2}\big)
\;\times
$$
$$
\prod_{s=-j}^{-1} \proba\big(X_{s}=w_{s}\tq Y_{-j-1}=u,
\,X_{-j}^{s-1}=\,w_{-j}^{s-1}\;,
$$
$$
X_{-j-1}=\,w_{-j-1},\,X_{-k}^{-j-2}=\,x_{-k}^{-j-2},\,
Y_{-k}^{-j-2}=\,w_{-k}^{-j-2}\big)
$$
for any $u\in\alphabet$.

Using the definition of $\rho$ we get 
$$
\sum_{c\neq w_{-j-1}}\proba\big(Y_{-j-1}=c,
\,X_{-j}^{-1}=\,w_{-j}^{-1}\tq 
$$
$$
X_{-j-1}=\,w_{-j-1},\,X_{-k}^{-j-2}=\,x_{-k}^{-j-2},\,
Y_{-k}^{-j-2}=\,w_{-k}^{-j-2}\big)
$$
$$
\le \rho\;e^{\somb_{j,\,k}}\;\prod_{s=-j}^{-1} 
\proba\big(X_{s}=w_{s}\tq Y_{-j-1}=u,
\,X_{-j}^{s-1}=\,w_{-j}^{s-1},\,
$$
$$
X_{-j-1}=\,w_{-j-1},\,X_{-k}^{-j-2}=\,x_{-k}^{-j-2},\,
Y_{-k}^{-j-2}=\,w_{-k}^{-j-2}\big)
$$
$$
=\rho\;e^{\somb_{j,\,k}}\;\proba\big(X_{-j}^{-1}=w_{-j}^{-1}\tq
$$
$$
Y_{-j-1}=u,\,
X_{-j-1}=\,w_{-j-1},\,X_{-k}^{-j-2}=\,x_{-k}^{-j-2},\,
Y_{-k}^{-j-2}=\,w_{-k}^{-j-2}\big)\;.
$$
We obviously have
$$
\sum_{c\neq w_{-j-1}}\proba\big(Y_{-j-1}=c,
\,X_{-j}^{-1}=\,w_{-j}^{-1}\tq 
$$
$$
X_{-j-1}=\,w_{-j-1},\,X_{-k}^{-j-2}=\,x_{-k}^{-j-2},\,
Y_{-k}^{-j-2}=\,w_{-k}^{-j-2}\big)
$$
$$
=\sum_{u}
\proba\big(Y_{-j-1}=u\tq
X_{-j-1}=\,w_{-j-1},\,X_{-k}^{-j-2}=\,x_{-k}^{-j-2},\,
Y_{-k}^{-j-2}=\,w_{-k}^{-j-2}\big)\;\times
$$
$$
\sum_{c\neq w_{-j-1}}\proba\big(Y_{-j-1}=c,
\,X_{-j}^{-1}=\,w_{-j}^{-1}\tq
$$
$$ 
X_{-j-1}=\,w_{-j-1},\,X_{-k}^{-j-2}=\,x_{-k}^{-j-2},\,
Y_{-k}^{-j-2}=\,w_{-k}^{-j-2}\big)
$$
$$
\le \rho\;e^{\somb_{j,\,k}}\;\sum_{u}
\proba\big(Y_{-j-1}=u\tq 
X_{-j-1}=\,w_{-j-1},\,X_{-k}^{-j-2}=\,x_{-k}^{-j-2},\,
Y_{-k}^{-j-2}=\,w_{-k}^{-j-2}\big)\,\times
$$
$$
\proba\big(X_{-j}^{-1}=w_{-j}^{-1}\tq Y_{-j-1}=u,\,
X_{-j-1}=\,w_{-j-1},\,X_{-k}^{-j-2}=\,x_{-k}^{-j-2},\,
Y_{-k}^{-j-2}=\,w_{-k}^{-j-2}\big)
$$
$$
=\rho\;e^{\somb_{j,\,k}}\;\sum_{u}
\proba\big(X_{-j}^{-1}=w_{-j}^{-1},\,Y_{-j-1}=u\tq
$$
$$
X_{-j-1}=\,w_{-j-1},\,X_{-k}^{-j-2}=\,x_{-k}^{-j-2},\,
Y_{-k}^{-j-2}=\,w_{-k}^{-j-2}\big)
$$
$$
=\rho\;e^{\somb_{j,\,k}}\;
\proba\big(X_{-j}^{-1}=w_{-j}^{-1}\tq
X_{-j-1}=\,w_{-j-1},\,X_{-k}^{-j-2}=\,x_{-k}^{-j-2},\,
Y_{-k}^{-j-2}=\,w_{-k}^{-j-2}\big)\;.
$$
Therefore
$$
\proba\big(Y_{-j-1}\neq w_{-j-1},
\,X_{-j-1}^{-1}=\,w_{-j-1}^{-1},\,
Y_{-k}^{-j-2}=\,w_{-k}^{-j-2}\big)
$$
$$
\le \rho\;e^{\somb_{j,\,k}}\!\!\!\sum_{x_{-k}^{-j-2}}\!\!
\proba\big(X_{-j}^{-1}=w_{-j}^{-1}\tq
X_{-j-1}=\,w_{-j-1},\,X_{-k}^{-j-2}=\,x_{-k}^{-j-2},\,
Y_{-k}^{-j-2}=\,w_{-k}^{-j-2}\big)\,\times
$$
$$
\proba\big(X_{-j-1}=\,w_{-j-1},\,X_{-k}^{-j-2}=\,x_{-k}^{-j-2},\,
Y_{-k}^{-j-2}=\,w_{-k}^{-j-2}\big)
$$
$$
=\rho\;e^{\somb_{j,\,k}}\!\!\sum_{x_{-k}^{-j-2}}
\proba\big(X_{-j}^{-1}=w_{-j}^{-1},\,
X_{-j-1}=\,w_{-j-1},\,X_{-k}^{-j-2}=\,x_{-k}^{-j-2},\,
Y_{-k}^{-j-2}=\,w_{-k}^{-j-2}\big)
$$
$$
=\rho\;e^{\somb_{j,\,k}}\;\proba\big(X_{-j}^{-1}=w_{-j}^{-1},\,
X_{-j-1}=\,w_{-j-1},\,Y_{-k}^{-j-2}=\,w_{-k}^{-j-2}\big)\;,
$$
and the second result  result follows.
\end{proof}


\begin{lemma}\label{undeplus}
For any $k\ge0$
$$
\sup_{a\in\alphabet}\;\sup_{w_{-k}^{-1}\in\alphabet^{k}}
\big|\proba\big(Y_{0}=a\tq Y_{-k}^{-1}= w_{-k}^{-1}\big)-
\proba\big(X_{0}=a\tq Y_{-k}^{-1}=w_{-k}^{-1}\big)\big|\le \rho\;.
$$
\end{lemma}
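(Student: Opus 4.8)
The plan is to compare the two conditional laws by decomposing each according to whether the chains agree with the target symbol $a$. Fix $a\in\alphabet$ and $w_{-k}^{-1}\in\alphabet^{k}$, and abbreviate the conditioning event $\{Y_{-k}^{-1}=w_{-k}^{-1}\}$ by $W$. Splitting each probability on the value of the companion symbol gives
$$
\proba\big(Y_{0}=a\tq W\big)=\proba\big(Y_{0}=a,\,X_{0}=a\tq W\big)+\proba\big(Y_{0}=a,\,X_{0}\neq a\tq W\big),
$$
and the analogous identity for $\proba(X_{0}=a\tq W)$ with the roles of $X_{0}$ and $Y_{0}$ interchanged. The common term $\proba(X_{0}=a,\,Y_{0}=a\tq W)$ cancels in the difference, leaving
$$
\proba\big(Y_{0}=a\tq W\big)-\proba\big(X_{0}=a\tq W\big)=\proba\big(Y_{0}=a,\,X_{0}\neq a\tq W\big)-\proba\big(X_{0}=a,\,Y_{0}\neq a\tq W\big).
$$
Since both terms on the right are non-negative, it suffices to bound each of them by $\rho$.

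The key step is to reintroduce the full pair-valued past $\paire_{-k}^{-1}$, which is what the definition of $\rho$ controls, by conditioning further on $X_{-k}^{-1}$ and averaging against $\proba(X_{-k}^{-1}=x_{-k}^{-1}\tq W)$. For the second term, on the event $\{\paire_{-k}^{-1}=(x_{-k}^{-1},w_{-k}^{-1})\}$ I would factor
$$
\proba\big(X_{0}=a,\,Y_{0}\neq a\tq \paire_{-k}^{-1}\big)=\proba\big(X_{0}=a\tq \paire_{-k}^{-1}\big)\;\proba\big(Y_{0}\neq a\tq X_{0}=a,\,\paire_{-k}^{-1}\big),
$$
and the last factor equals $\sum_{b\neq a}\proba(Y_{0}=b\tq X_{0}=a,\,\paire_{-k}^{-1})\le\rho$ directly from the definition of $\rho$. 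Averaging over $x_{-k}^{-1}$ and using the tower property then yields $\proba(X_{0}=a,\,Y_{0}\neq a\tq W)\le\rho\,\proba(X_{0}=a\tq W)\le\rho$.

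For the first term I would instead write
$$
\proba\big(Y_{0}=a,\,X_{0}\neq a\tq \paire_{-k}^{-1}\big)=\sum_{b\neq a}\proba\big(X_{0}=b\tq\paire_{-k}^{-1}\big)\,\proba\big(Y_{0}=a\tq X_{0}=b,\,\paire_{-k}^{-1}\big),
$$
and observe that for each $b\neq a$ the event $\{Y_{0}=a\}$ is contained in $\{Y_{0}\neq b\}$, so $\proba(Y_{0}=a\tq X_{0}=b,\,\paire_{-k}^{-1})\le\rho$ again by the definition of $\rho$. This bounds the inner sum by $\rho\,\proba(X_{0}\neq a\tq\paire_{-k}^{-1})$, and averaging over the past gives $\proba(Y_{0}=a,\,X_{0}\neq a\tq W)\le\rho\,\proba(X_{0}\neq a\tq W)\le\rho$. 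Feeding the two estimates into the cancellation identity produces $-\rho\le\proba(Y_{0}=a\tq W)-\proba(X_{0}=a\tq W)\le\rho$, and taking the supremum over $a$ and $w_{-k}^{-1}$ finishes the argument.

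The only delicate point is this averaging manoeuvre: $\rho$ constrains conditional probabilities given the full past $\paire_{-k}^{-1}$, whereas the statement conditions on the $Y$-past alone, so one must pass through the conditional law of $X_{-k}^{-1}$ given $W$ and verify that the uniform bound by $\rho$ survives the averaging; it does precisely because $\rho$ is a supremum over all pasts. Note that no non-nullness is used here, consistent with the statement holding for every $\rho>0$.
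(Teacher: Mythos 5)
Your proof is correct and is essentially the paper's argument in a slightly more symmetric packaging: both reintroduce the hidden $X$-past by a tower decomposition and then apply the definition of $\rho$ in the same two ways, once given $X_{0}=a$ (which the paper phrases as $\proba(Y_{0}=a\tq X_{0}=a,\paire_{-k}^{-1})\ge 1-\rho$, yielding its lower bound) and once given $X_{0}=b\neq a$ (yielding its upper bound). Your cancellation identity merely reorganizes these same two estimates as bounds on the discrepancy events $\{Y_{0}=a,\,X_{0}\neq a\}$ and $\{X_{0}=a,\,Y_{0}\neq a\}$, so there is no substantive difference from the paper's proof.
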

\begin{proof}
We write
$$
\proba\big(Y_{0}=a,\,Y_{-k}^{-1}=w_{-k}^{-1}\big)
=\sum_{x_{-k}^{0}}
\proba\big(Y_{0}=a,\,Y_{-k}^{-1}=w_{-k}^{-1},\,X_{-k}^{0}=x_{-k}^{0}\big)
$$
$$
=\sum_{x_{-k}^{0}}
\proba\big(Y_{0}=a
\tq 
Y_{-k}^{-1}=w_{-k}^{-1},\,X_{-k}^{0}=x_{-k}^{0}\big)\;
\proba\big(X_{-k}^{0}=x_{-k}^{0},\,Y_{-k}^{-1}=w_{-k}^{-1}\big)\;.
$$
We will split this sum in two sums, one with $x_{0}=a$ and the other
one with $x_{0}\neq a$. 

If $x_{0}=a$ we have
$$
\proba\big(Y_{0}=a
\tq 
Y_{-k}^{-1}=w_{-k}^{-1},\,X_{-k}^{0}=x_{-k}^{0}\big)
$$
$$
=1-\sum_{w_{0}\neq a}\proba\big(Y_{0}=w_{0}
\tq Y_{-k}^{-1}=w_{-k}^{-1},\,X_{-k}^{0}=x_{-k}^{0}\big)\ge 1-\rho
$$
from the definition of $\rho$. Therefore
$$
\sum_{x_{-k}^{-1}}
\proba\big(Y_{0}=a
\tq
Y_{-k}^{-1}=w_{-k}^{-1},\,X_{-k}^{-1}=x_{-k}^{-1},\,X_{0}=a\big)\;\times
$$
$$
\proba\big(X_{-k}^{-1}=x_{-k}^{-1},\,X_{0}=a,\,Y_{-k}^{-1}=w_{-k}^{-1}\big)
$$
$$
\ge(1-\rho)\;\sum_{x_{-k}^{-1}}
\proba\big(X_{-k}^{-1}=x_{-k}^{-1},\,X_{0}=a,\,Y_{-k}^{-1}=w_{-k}^{-1}
\big)
=(1-\rho)\;
\proba\big(X_{0}=a,\,Y_{-k}^{-1}=w_{-k}^{-1}\big).
$$
We conclude that
$$
\proba\big(Y_{0}=a,\,Y_{-k}^{-1}=w_{-k}^{-1}\big)\ge (1-\rho)\;
\proba\big(X_{0}=a,\,Y_{-k}^{-1}=w_{-k}^{-1}\big)\;,
$$
which implies
$$
\proba\big(Y_{0}=a\tq\,Y_{-k}^{-1}=w_{-k}^{-1}\big)\ge (1-\rho)\;
\proba\big(X_{0}=a\tq\,Y_{-k}^{-1}=w_{-k}^{-1}\big)\;,
$$
and therefore
\begin{equation}\label{lbb}
\proba\big(Y_{0}=a\tq\,Y_{-k}^{-1}=w_{-k}^{-1}\big)\ge \;
\proba\big(X_{0}=a\tq\,Y_{-k}^{-1}=w_{-k}^{-1}\big)-\rho\;.
\end{equation}
We also have the upper bound for $x_{0}=a$
$$
\sum_{x_{-k}^{-1}}
\proba\big(Y_{0}=a
\tq
Y_{-k}^{-1}=w_{-k}^{-1},\,X_{-k}^{-1}=x_{-k}^{-1},\,X_{0}=a\big)\;\times
$$
$$
\proba\big(X_{-k}^{-1}=x_{-k}^{-1},\,X_{0}=a,\,Y_{-k}^{-1}=w_{-k}^{-1}\big)
$$
$$
\le \sum_{x_{-k}^{-1}}
\proba\big(X_{-k}^{-1}=x_{-k}^{-1},\,X_{0}=a,\,Y_{-k}^{-1}=w_{-k}^{-1}
\big)
=\proba\big(X_{0}=a,\,Y_{-k}^{-1}=w_{-k}^{-1}\big)\;.
$$
For $x_{0}\neq a$ we have from the definition of $\rho$
$$
\sum_{x_{-k}^{-1},\,x_{0}\neq a}
\proba\big(Y_{0}=a
\tq Y_{-k}^{-1}=w_{-k}^{-1},\,X_{-k}^{0}=x_{-k}^{0}\big)\;
\proba\big(X_{-k}^{0}=x_{-k}^{0},\,Y_{-k}^{-1}=w_{-k}^{-1}
\big)
$$
$$
\le \rho \sum_{x_{-k}^{-1},\,x_{0}\neq a}
 \proba\big(X_{-k}^{0}=x_{-k}^{0},\,Y_{-k}^{-1}=w_{-k}^{-1}
\big)
\le \rho \;\proba\big(Y_{-k}^{-1}=w_{-k}^{-1}\big)\;.
$$
From the two last  estimates we get 
$$
\proba\big(Y_{0}=a,\,Y_{-k}^{-1}=w_{-k}^{-1}\big)\le
\proba\big(X_{0}=a,\,Y_{-k}^{-1}=w_{-k}^{-1}\big) 
+\rho \;\proba\big(Y_{-k}^{-1}=w_{-k}^{-1}\big)\;,
$$
hence
$$
\proba\big(Y_{0}=a\tq Y_{-k}^{-1}=w_{-k}^{-1}\big)\le
\proba\big(X_{0}=a\tq Y_{-k}^{-1}=w_{-k}^{-1}\big)
+\rho\;,
$$
and the result follows using the lower bound \eqref{lbb}. 
\end{proof}

\section{Proofs}\label{proofs}

\begin{proof}[Proof of Proposition \ref{leX}]
The non-nullness follows from Lemma
\ref{nonj}.

We also have
$$
\proba\big(X_0=a\tq X_{-k}^{-1}=x_{-k}^{-1}\big)=
$$
$$
\sum_{y_{-k}^{-j-1}}
\proba\big(X_0=a\tq X_{-j}^{-1}=x_{-j}^{-1},\,
\paire_{-k}^{-j-1}=(x_{-k}^{-j-1},y_{-k}^{-j-1})\big)\times
$$
$$
\proba\big(Y_{-k}^{-j-1}=y_{-k}^{-j-1}\tq
X_{-k}^{-1}=x_{-k}^{-1}\big)\;.
$$
We now fix a sequence
$\zeta_{-k}^{-j-1}\in(\alphabet^{2})^{k-j}$. 

We deduce that  for any $a$ and any $x_{-k}^{-1}$ 
$$
e^{-\beta_{j,\,k}}\;
\proba\big(X_0=a\tq X_{-j}^{-1}=x_{-j}^{-1},\,
\paire_{-k}^{-j-1}=\zeta_{-k}^{-j-1}\big)
\le \proba\big(X_0=a\tq X_{-k}^{-1}=x_{-k}^{-1}\big)\le
$$
$$
e^{\beta_{j,\,k}}\;
\proba\big(X_0=a\tq X_{-j}^{-1}=x_{-j}^{-1},\,
\paire_{-k}^{-j-1}=\zeta_{-k}^{-j-1}\big)
$$
and the second result follows.
\end{proof}
\begin{proof}[Proof of Theorem \ref{blurred}.]
We first observe that from Lemma \ref{undeplus} it is enough to
establish an upper bound on
$$
\big|\proba\big(X_{0}=a\tq Y_{-k}^{-1}=w_{-k}^{-1}\big) -
\proba\big(X_{0}=a\tq X_{-k}^{-1}=w_{-k}^{-1}\big) \big|\;.
$$
For $k=0$ this quantity is equal to zero and therefore we will from
now on assume $k\ge1$.

We write
$$
\proba\big(X_{0}=a\tq Y_{-k}^{-1}=w_{-k}^{-1}\big) -
\proba\big(X_{0}=a\tq X_{-k}^{-1}=w_{-k}^{-1}\big)
$$
$$
=\sum_{j=0}^{k-1}\bigg[
\proba\big(X_{0}=a\tq X_{-j}^{-1}=w_{-j}^{-1},\,
Y_{-k}^{-j-1}=w_{-k}^{-j-1}\big)
$$
$$
-\proba\big(X_{0}=a\tq X_{-j-1}^{-1}=w_{-j-1}^{-1},\,
Y_{-k}^{-j-2}=w_{-k}^{-j-2}\big)
\bigg]\;.
$$
We have for $0\le j\le k-1$
$$
\proba\big(X_{0}=a\tq X_{-j}^{-1}=w_{-j}^{-1},\,
Y_{-k}^{-j-1}=w_{-k}^{-j-1}\big)
$$
$$
-\proba\big(X_{0}=a\tq X_{-j-1}^{-1}=w_{-j-1}^{-1},\,
Y_{-k}^{-j-2}=w_{-k}^{-j-2}\big)
$$
$$
=\sum_{x_{-j-1}}\big[
\proba\big(X_{0}=a\tq X_{-j}^{-1}=w_{-j}^{-1},\,X_{-j-1}=x_{-j-1},\,
Y_{-k}^{-j-1}=w_{-k}^{-j-1}\big)
$$
$$-\proba\big(X_{0}=a\tq X_{-j-1}^{-1}=w_{-j-1}^{-1},\,
Y_{-k}^{-j-2}=w_{-k}^{-j-2}\big)
\big]\;
\times 
$$
$$
\proba\big(X_{-j-1}=x_{-j-1}\tq X_{-j}^{-1}=w_{-j}^{-1},\,
Y_{-k}^{-j-1}=w_{-k}^{-j-1}\big)\;.
$$
For $x_{-j-1}\neq w_{-j-1}$,
we apply Lemma \ref{lescon2} to each term in the square brackets, we
get
$$
\bigg|\sum_{x_{-j-1}\neq w_{-j-1}}\big[
\proba\big(X_{0}=a\tq X_{-j}^{-1}=w_{-j}^{-1},\,X_{-j-1}=x_{-j-1},\,
Y_{-k}^{-j-1}=w_{-k}^{-j-1}\big)
$$
$$-\proba\big(X_{0}=a\tq X_{-j-1}^{-1}=w_{-j-1}^{-1},\,
Y_{-k}^{-j-2}=w_{-k}^{-j-2}\big)
\big]
$$
$$
\times \;
\proba\big(X_{-j-1}=x_{-j-1}\tq X_{-j}^{-1}=w_{-j}^{-1},\,
Y_{-k}^{-j-1}=w_{-k}^{-j-1}\big)
\bigg|
$$
$$
\le  2\,\big(e^{\beta_{j+1,\,k}}-1\big)\;
\sum_{x_{-j-1}\neq w_{-j-1}}\!\!\!\!\!\!
\proba\big(X_{-j-1}=x_{-j-1}\tq X_{-j}^{-1}=w_{-j}^{-1},\,
Y_{-k}^{-j-1}=w_{-k}^{-j-1}\big)+
$$
$$
\sum_{x_{-j-1}\neq w_{-j-1}}
\proba\big(X_{-j-1}=x_{-j-1}\tq X_{-j}^{-1}=w_{-j}^{-1},\,
Y_{-k}^{-j-1}=w_{-k}^{-j-1}\big)\;\times
$$
$$
\bigg|\proba\big(X_{0}=a\tq
X_{-k}^{-j-2}=x_{-k}^{-j-2},\,X_{-j-1}=x_{-j-1},\,
X_{-j}^{-1}=w_{-j}^{-1}\big)
$$
$$
-\proba\big(X_{0}=a\tq X_{-k}^{-j-2}=x_{-k}^{-j-2},
\,X_{-j-1}=w_{-j-1},\, X_{-j}^{-1}=w_{-j}^{-1}\big)\bigg|
$$
$$
\le
\left[2\,\big(e^{\beta_{j+1,\,k}}-1\big)+
\big(e^{2\,\beta_{j+1,\,k}}-1\big)\right]\;\times
$$
$$
\proba\big(X_{-j-1}\neq w_{-j-1}\tq X_{-j}^{-1}=w_{-j}^{-1},\,
Y_{-k}^{-j-1}=w_{-k}^{-j-1}\big)
$$
$$
\le \frac{\rho\;e^{2\,\somb_{j,\,k}}\;
\left[2\,\big(e^{\beta_{j,\,k}}-1\big)+\big(e^{2\,\beta_{j,\,k}}-1\big)\right]}{
\alpha\;(1-\rho)^{2}}\;,
$$
by Proposition \ref{leX} and Lemma \ref{lescon4}.

We now consider the case $x_{-j-1}=w_{-j-1}$. We have to estimate
$$
 \bigg|\proba\big(X_{0}=a\tq X_{-j-1}^{-1}=w_{-j-1}^{-1}
Y_{-k}^{-j-1}=w_{-k}^{-j-1}\big)
$$
$$
-\proba\big(X_{0}=a\tq X_{-j-1}^{-1}=w_{-j-1}^{-1},\,
Y_{-k}^{-j-2}=w_{-k}^{-j-2}\big)
\bigg|\;.
$$
We write
$$
\proba\big(X_{0}=a\tq X_{-j-1}^{-1}=w_{-j-1}^{-1}
Y_{-k}^{-j-1}=w_{-k}^{-j-1}\big)
$$
$$
-\proba\big(X_{0}=a\tq X_{-j-1}^{-1}=w_{-j-1}^{-1},\,
Y_{-k}^{-j-2}=w_{-k}^{-j-2}\big)
$$
$$
=\sum_{c}
\big[\proba\big(X_{0}=a\tq X_{-j-1}^{-1}=w_{-j-1}^{-1}
Y_{-k}^{-j-1}=w_{-k}^{-j-1}\big)
$$
$$
-\proba\big(X_{0}=a\tq X_{-j-1}^{-1}=w_{-j-1}^{-1},\,Y_{-j-1}=c,\,
Y_{-k}^{-j-2}=w_{-k}^{-j-2}\big)\big]
$$
$$
\times \proba\big(Y_{-j-1}=c \tq X_{-j-1}^{-1}=w_{-j-1}^{-1},\,
Y_{-k}^{-j-2}=w_{-k}^{-j-2}\big)\;.
$$
The term with $c=w_{-j-1}$ in the above sum vanishes while for $c\neq
w_{-j-1}$
we can apply the first part of Lemma \ref{lescon2} to each term
in the square bracket and get
$$
\bigg|\sum_{c}
\big[\proba\big(X_{0}=a\tq X_{-j-1}^{-1}=w_{-j-1}^{-1}
Y_{-k}^{-j-1}=w_{-k}^{-j-1}\big)
$$
$$
-\proba\big(X_{0}=a\tq X_{-j-1}^{-1}=w_{-j-1}^{-1},\,Y_{-j-1}=c,\,
Y_{-k}^{-j-2}=w_{-k}^{-j-2}\big)\big]
$$
$$
\times  \proba\big(Y_{-j-1}=c \tq X_{-j-1}^{-1}=w_{-j-1}^{-1},\,
Y_{-k}^{-j-2}=w_{-k}^{-j-2}\big)\bigg|
$$
$$
\le 2\,\big(e^{\beta_{j+1,\,k}}-1\big)\;
\sum_{c\neq w_{-j-1}}\proba\big(Y_{-j-1}=c \tq X_{-j-1}^{-1}=w_{-j-1}^{-1},\,
Y_{-k}^{-j-2}=w_{-k}^{-j-2}\big)
$$
$$
=2\,\big(e^{\beta_{j+1,\,k}}-1\big)\;\proba\big(Y_{-j-1} \neq w_{-j-1}
\tq X_{-j-1}^{-1}=w_{-j-1}^{-1},\,
Y_{-k}^{-j-2}=w_{-k}^{-j-2}\big)
$$
$$
\le 2\;\rho\;e^{\somb_{j,\,k}}\;\big(e^{\beta_{j+1,\,k}}-1\big)
$$
by the second part of Lemma \ref{lescon4}.

Collecting all the previous  estimates we get
$$
\big|\proba\big(X_{0}=a\tq Y_{-k}^{-1}=w_{-k}^{-1}\big) -
\proba\big(X_{0}=a\tq X_{-k}^{-1}=w_{-k}^{-1}\big) \big|
$$
$$
\le 
\sum_{j=0}^{k-1}\left(
 \frac{\rho\;e^{2\,\somb_{j,\,k}}\;
\left[2\,\big(e^{\beta_{j+1,\,k}}-1\big)+\big(e^{2\,\beta_{j+1,\,k}}-1\big)\right]}{
\alpha\;(1-\rho)^{2}}
+2\;\rho\;e^{\somb_{j,\,k}}\;\big(e^{\beta_{j+1,\,k}}-1\big)
\right)
$$
$$
\le 
\left(
 \frac{e^{2\,\somb_{k,\,k}}\;
\left[2\,\big(e^{\somb_{k,\,k}}-1\big)+\big(e^{2\,\somb_{k,\,k}}-1\big)\right]}{
\alpha\;(1-\rho)^{2}}
+2\;e^{\somb_{k,\,k}}\;\big(e^{\somb_{k,\,k}}-1\big)
\right)\rho
$$
since from $\beta_{j,\,k}\ge 0$ we have
$$
\sum_{j=0}^{k} \left(e^{\beta_{j,\,k}}-1\right)\le e^{\somb_{k,\,k}}-1\;.
$$
The first part of the theorem follows.

From the second part of Lemma \ref{nonj} and the first part of Theorem
we obtain
$$
\bigg|
\proba\big(Y_{0}=a\tq Y_{-k}^{-1}=w_{-k}^{-1}\big)-
\proba\big(X_{0}=a\tq X_{-k}^{-1}=w_{-k}^{-1}\big)\bigg|
$$
$$
\le\rho\;\frac{R(\alpha,\,j,\,\rho)}{\alpha}
\;\proba\big(X_{0}=a\tq X_{-k}^{-1}=w_{-k}^{-1}\big)\;,
$$
and the second part of the Theorem follows.
\end{proof}

\begin{proof}[Proof of Proposition \ref{leY}.]
We have
$$
\frac{\proba\big(Y_{0}=a\tq Y_{-j}^{-1}=y_{-j}^{-1},\,
Y_{-k}^{-j-1}=y_{-k}^{-j-1}\big)}
{\proba\big(Y_{0}=a\tq Y_{-j}^{-1}=y_{-j}^{-1},\,
Y_{-k}^{-j-1}=\tilde y_{-k}^{-j-1}\big)}
$$
$$
=
\frac{\proba\big(Y_{0}=a\tq Y_{-j}^{-1}=y_{-j}^{-1},\,
Y_{-k}^{-j-1}=y_{-k}^{-j-1}\big)}
{\proba\big(X_{0}=a\tq X_{-j}^{-1}=y_{-j}^{-1},\,
X_{-k}^{-j-1}=y_{-k}^{-j-1}\big)}\;\times
$$
$$
\frac{\proba\big(X_{0}=a\tq X_{-j}^{-1}=y_{-j}^{-1},\,
X_{-k}^{-j-1}=\tilde y_{-k}^{-j-1}\big)}
{\proba\big(Y_{0}=a\tq Y_{-j}^{-1}=y_{-j}^{-1},\,
Y_{-k}^{-j-1}=\tilde y_{-k}^{-j-1}\big)}\;\times
$$
$$
\frac{\proba\big(X_{0}=a\tq X_{-j}^{-1}=y_{-j}^{-1},\,
X_{-k}^{-j-1}=y_{-k}^{-j-1}\big)}
{\proba\big(X_{0}=a\tq X_{-j}^{-1}=y_{-j}^{-1},\,
X_{-k}^{-j-1}=\tilde y_{-k}^{-j-1}\big)}\;,
$$
and the first result follows using twice the second part of Proposition
\ref{leX} 
and twice Theorem \ref{blurred}. 

The second result follows at once from the first one and the identity
$$
\proba\big(Y_{0}=a\tq Y_{-j}^{-1}=y_{-j}^{-1}\big)
$$
$$
=\!\!\!\sum_{y_{-k}^{-j-1}}\!
\proba\big(Y_{-k}^{-j-1}=y_{-k}^{-j-1}\tq
Y_{-j}^{-1}=y_{-j}^{-1}\big)\,
\proba\big(Y_{0}=a\tq Y_{-k}^{-j-1}=y_{-k}^{-j-1},\,
Y_{-j}^{-1}=y_{-j}^{-1}\big).
$$
The third result follows using Theorem \ref{blurred} and the second
part of Proposition \ref{leX}.
\end{proof}

\begin{proof}[Proof of Theorem \ref{predire}]
The first part is the result in Lemma \ref{undeplus}. 

For the second part we have using Lemma \ref{undeplus} and the third part
of Theorem \ref{leY}
$$
\proba\big(X_{0}=a\tq Y_{-k}^{-1}=w_{-k}^{-1}\big)
\le \proba\big(Y_{0}=a\tq Y_{-k}^{-1}= w_{-k}^{-1}\big)+ \rho
$$
$$
\le \proba\big(Y_{0}=a\tq Y_{-k}^{-1}= w_{-k}^{-1}\big)\;\left(1+
\frac{\rho}{\alpha-\rho\;R(\alpha,\,k,\,\rho)}\right)\;.
$$
The lower bound follows similarly.
\end{proof}



\begin{thebibliography}{99}


\bibitem{baum},L.~E.~Baum, and T.~Petri. Statistical Inference for Probabilistic Functions of Finite State Markov Chains. The Annals of Mathematical Statistics. {\bf 37}, 1554-1563
(1966).

\bibitem{cgl} P.Collet, A.Galves, F.Leonardi.  
Random perturbations of
  stochastic chains with unbounded variable length memory.  Electronic
  Journal of Probability {\bf 13}, 1345-1361 (2008).


\bibitem{dfgov} A.~Duarte, R.~Fraiman, A.~Galves, G.~Ost,
  C.~Vargas. Retrieving a context tree from EEG
  data. Preprint. arXiv:1602.00579. 


  \bibitem{garcia} N.~L.~Garcia and L.~J.~Moreira. Stochastically
    perturbed chains of variable memory. J. Stat. Phys. {\bf
      159}, 1107-1126 (2015). 

\bibitem{rabiner}
    L.~R.~Rabiner.
    A tutorial on hidden Markov models and selected applications in
    speech recognition. 
    Proceedings of the IEEE. {\bf 77}, 257-286 (1989)
   
\bibitem{verbitsky} E.~Verbitisky. Hidden Gibbs Models. Lecture Notes
  (2015 on line). 
{\tt http://pub.math.leidenuniv.nl/$\sim$verbitskiyea/pages/lecturenotes.html}.

\end{thebibliography}
\end{document}